\DeclareMathOperator\dist{dist}
\newcommand{\norm}[1]{\left\lVert#1\right\rVert}
\newtheorem{theorem}{Theorem}[section]
\newtheorem{corollary}[theorem]{Corollary}
\newtheorem{lemma}[theorem]{Lemma}
\theoremstyle{definition}
\theoremstyle{remark}
\newtheorem*{remark}{Remark}
\renewcommand\Re{\operatorname{Re}}
\renewcommand\Im{\operatorname{Im}}
\newcommand\TT{\mathbb T}
\newcommand\veps{\varepsilon}
\newcommand\GHZ{\mathfrak{G}_0}
\newcommand\GHU{\mathfrak{G}_{\geq 1}}
\newcommand\Cud{\mathbb{C}_{\frac{1}{2}}}
\newcommand\Rewud{\Re(w)-\frac 12}
\newcommand\PRewud{\left(\Re(w)-\frac 12\right)}
\newcommand\CC{\mathbb C}
\newcommand\DD{\mathbb D}
\author{Frédéric Bayart}
\address{Université Clermont Auvergne, LMBP, Campus des Cézeaux, F-63177 Aubiere Cedex, France}
\email{frederic.bayart@uca.fr}
\author{Athanasios Kouroupis}
\address{Department of Mathematical Sciences, Norwegian University of Science and Technology (NTNU), 7491 Trondheim, Norway}
\email{athanasios.kouroupis@ntnu.no}
\title[Schatten class composition operators]{Schatten class composition operators on the Hardy space of Dirichlet series and a comparison-type principle}
\date{}
\begin{document}
\maketitle
\begin{abstract}
We give necessary and sufficient conditions for a composition operator with Dirichlet series symbol to belong to the Schatten classes $S_p$ of the Hardy space $\mathcal{H}^2$ of Dirichlet series.
For $p\geq 2$, these conditions lead to a characterization for the subclass of symbols with bounded imaginary parts.
Finally, we establish a comparison-type principle for composition operators. 
Applying our techniques in conjunction with classical geometric function theory methods,  we prove the analogue of the polygonal compactness theorem for $\mathcal{H}^2$ and we give examples of bounded composition operators with Dirichlet series symbols on $\mathcal{H}^p,\,p>0$.
\end{abstract}

\section{Introduction}
The Hardy space $\mathcal{H}^2$ of Dirichlet series, which was first systematically studied by H. Hedenmalm, P. Lindqvist, and K. Seip \cite{HLS97}, is defined as
$$\mathcal{H}^2=\left\{f(s)=\sum_{n\geq1}\frac{a_n}{n^s}:\norm{f}_{\mathcal{H}^2}^2=\sum_{n\geq1}|a_n|^2<\infty \right\}.$$

Gordon and Hedenmalm \cite{GH99} determined the class $\mathfrak{G}$ of symbols which generate bounded composition operators on the Hardy space $\mathcal{H}^2$.  The Gordon--Hedenmalm class $\mathfrak{G}$ consists of all functions $\psi(s) = c_0s+\varphi(s)$, where $c_0$ is a non-negative integer, called the characteristic of $\psi$,
and $\varphi$ is a Dirichlet series such that:
\begin{enumerate}[(i)]
	\item If $c_0=0$, then $\varphi(\mathbb{C}_0)\subset\mathbb{C}_\frac{1}{2}$.\label{item1}
	\item If $c_0\geq 1$, then $\varphi(\mathbb{C}_0)\subset\mathbb{C}_0$ or $\varphi\equiv i\tau$ for some $\tau\in\mathbb{R}$.\label{item2}
\end{enumerate}
We denote by  $\mathbb{C}_\theta,\,\theta\in\mathbb{R}$ the half-plane $\{s:\Re s>\theta\}$. We will also use the notation $\mathfrak{G}_0$ and $\mathfrak{G}_{\geq 1}$ for the subclasses of symbols that satisfy (\ref{item1}) and (\ref{item2}), respectively.

In this paper, we are mostly interested in the case $\psi=\varphi\in\GHZ.$ In that context, the compact operators $C_\varphi \colon \mathcal{H}^2 \to \mathcal{H}^2$ were characterized only very recently in \cite{BP21}, in terms of the behavior of the mean counting function
$$M_{\varphi}(w) = \lim_{\sigma \to 0^+}\lim_{T \to \infty}  \frac{\pi}{T}\sum\limits_{\substack{s\in\varphi^{-1}(\{w\})\\
		|\Im s|<T\\
		\sigma<\Re s<\infty}} \Re s ,\qquad w\in\Cud\backslash\{\varphi(+\infty)\}.$$
It turns out that $C_\varphi$ is compact if and only if
\begin{equation}\label{eq:compact}
\lim\limits_{\Re w\rightarrow\frac{1}{2}^+}\frac{M_\varphi(w)}{\Re w-\frac{1}{2}}=0.
\end{equation}
The next step would be to characterize symbols $\varphi\in \GHZ$ such that $C_\varphi$ belongs to the Schatten class $S_p,$ $p>0.$ 
In the disk setting  D. H. Luecking and K. Zhu \cite{LZ92} proved that a composition operator $C_\phi$ on the Hardy space $H^2(\mathbb{D})$ belongs to the Schatten class $S_p,\, p>0$ if and only if 
\begin{equation}
\int\limits_{\mathbb{D}}\frac{\left(N_\phi(z)\right)^{\frac{p}{2}}}{\left(1-|z|^2\right)^{\frac{p}{2}+2}}\, dA(z)<+\infty,
\end{equation}
where $\phi$ is a holomorphic self-map of the unit disk and $N_\phi$ is the associated Nevanlinna counting function \cite{SHAP87}.

Our first main result is that the analogue characterization holds in the Dirichlet series setting
provided the symbol has bounded imaginary part. 

\begin{theorem}\label{boundedchar}
Suppose that the symbol $\varphi\in\mathfrak{G}_0$ has bounded imaginary part and that $p\geq 1$. Then, the composition operator $C_\varphi$ belongs to the class $S_{2p}$ if and only
if $\varphi$ satisfies the condition
\begin{equation}\label{eq:main1}
\int\limits_{\mathbb{C}_\frac{1}{2}}\frac{\left(M_\varphi(w)\right)^{p}}{\left(\Re w-\frac{1}{2}\right)^{p+2}}\, dA(w)<+\infty.
\end{equation}
For $p>0$ the above condition remains necessary and if  $p\geq 2$ it is necessary for all symbols in $\mathfrak{G}_0$.
\end{theorem}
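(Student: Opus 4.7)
The starting point is the Stanton-type identity of Bayart and Perfekt \cite{BP21},
\[
\|C_\varphi f\|_{\mathcal H^2}^2
 = |f(\varphi(+\infty))|^2
 + \frac{2}{\pi}\int_{\Cud}|f'(w)|^2 M_\varphi(w)\,dA(w),\qquad f\in\mathcal H^2,
\]
which realises $C_\varphi^*C_\varphi$ as a rank-one perturbation of
$\tfrac{2}{\pi}\,J^*J$, where
$J\colon\mathcal H^2\to L^2(\Cud,\,M_\varphi\,dA)$ is the derivative
embedding $f\mapsto f'$. Since Schatten classes are insensitive to
finite rank perturbations, $C_\varphi\in S_{2p}$ if and only if
$J\in S_{2p}$, so the theorem becomes a Schatten-class Carleson
embedding problem for the measure $M_\varphi\,dA$.

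For the sufficient direction, under the bounded imaginary part
hypothesis and $p\ge 1$, I would fix a Whitney decomposition
$\{Q_n\}$ of $\Cud$ of hyperbolic side length $\Re w-\tfrac12$ and
attach to each box its normalised mass
$\lambda_n = M_\varphi(Q_n)/|Q_n|$. Bounded imaginary part confines
$M_\varphi\,dA$ to a horizontal strip, so only countably many Whitney
columns contribute and on each $Q_n$ the reproducing kernel of
$\mathcal H^2$ is comparable to that of a standard weighted Bergman
space on a half-plane. Luecking's theorem on Schatten-class Toeplitz
operators \cite{LZ92} then yields $J\in S_{2p}$ if and only if
$(\lambda_n)\in\ell^p$, and a sub-mean-value estimate for $M_\varphi$
translates this discrete criterion into \eqref{eq:main1}. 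The
restriction $p\ge 1$ is precisely what is needed for the convexity
underlying Luecking's atomic decomposition.

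The necessary direction is proved by testing $C_\varphi^*C_\varphi$
against the normalised reproducing kernels
$k_w=K_w/\sqrt{\zeta(2\Re w)}$. A pseudohyperbolic localisation of
$|K_w'|^2$ combined with the Stanton identity yields the lower bound
$\langle C_\varphi^*C_\varphi k_w,k_w\rangle \gtrsim M_\varphi(w)/\PRewud$,
and the integral Schatten inequality
\[
\int_{\Cud}\langle Tk_w,k_w\rangle^p\,\frac{dA(w)}{\PRewud^2}
 \;\lesssim\; \|T\|_{S_p}^p \qquad (p>0)
\]
then delivers \eqref{eq:main1} as a necessary condition whenever
$\varphi$ has bounded imaginary part. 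For $p\ge 2$ a finer analysis
combining operator-theoretic inequalities (e.g.\ a trace expansion of
$\|C_\varphi^*C_\varphi\|_{S_p}^p$ coming from the operator Jensen
inequality in the orthonormal basis of Dirichlet monomials) with the
Stanton identity applied to each $n^{-s}$ should circumvent the
localisation step and remove the bounded imaginary part hypothesis.

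The principal technical obstacle is the sufficient direction outside
the bounded imaginary part class. Luecking's atomic/Whitney machinery
is geometric in nature and well adapted to the hyperbolic metric of a
horizontally bounded region; for a general $\varphi\in\GHZ$, however,
$M_\varphi$ may charge vertical strips of unbounded height, in which
case the reproducing kernels of $\mathcal H^2$ on distant Whitney
columns cease to be comparable to those of a half-plane Bergman space
and the column-by-column reassembly breaks down. Circumventing this
would seem to require a genuinely new ingredient outside the classical
Luecking--Zhu framework, which is exactly why the forward implication
is stated only in the bounded-imaginary-part setting.
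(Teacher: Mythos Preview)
Your outline has the right instincts---reduce to a Toeplitz/embedding operator, test against normalised kernels, appeal to convexity---but several of the steps you invoke are not actually available in the $\mathcal H^2$ setting as you state them, and the paper's proof proceeds differently.

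\medskip

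\textbf{Necessity.} The ``integral Schatten inequality''
\[
\int_{\Cud}\langle Tk_w,k_w\rangle^p\,\frac{dA(w)}{\PRewud^2}\;\lesssim\;\|T\|_{S_p}^p
\]
cannot hold as written: the measure $dA(w)/\PRewud^{2}$ has infinite mass on every vertical line, and $\mathcal H^2$ admits no M\"obius-invariant resolution of the identity that would make the left side a trace. What the paper does instead is replace this measure by a \emph{Carleson measure} $\mu$ for $(\mathcal D_{-2})_0$: the boundedness of the inclusion $I_\mu$ gives $I_\mu\circ T_\varphi^{p/2}\in S_2$, and unwinding $\|I_\mu\circ T_\varphi^{p/2}\|_{S_2}^2$ together with the operator inequality $\langle T^p x,x\rangle\ge\langle Tx,x\rangle^p$ (valid for $p\ge 1$) and the submean-value property of $M_\varphi$ yields the integral condition. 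The bounded-imaginary-part hypothesis enters precisely to make $\mathbf 1_{\varphi(\CC_0)}\PRewud\,dA$ a Carleson measure for $(\mathcal D_{-2})_0$; it is not merely a convenience for localising kernels. For $0<p<1$ the operator-convexity inequality reverses, so your kernel-testing route cannot work there; the paper handles that range by a separate reverse-H\"older argument applied to the eigenbasis of $T_\varphi$.

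\medskip

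\textbf{Sufficiency.} Your Whitney/Luecking scheme may be workable on a bounded strip, but it is not what the paper does and it is not free: Luecking's theorem is proved for standard Bergman kernels, and transferring it to $(\mathcal D_{-2})_0$ requires checking the off-diagonal kernel estimates that drive his atomic decomposition. The paper bypasses all discretisation with a single H\"older split: for any orthonormal basis $(f_n)$,
\[
\langle T_\varphi f_n,f_n\rangle^p
=\left(\int\frac{|f_n|^{2/p}M_\varphi}{\rho^{1/q}}\cdot|f_n|^{2/q}\rho^{1/q}\,dA\right)^p
\le\int\frac{|f_n|^2 M_\varphi^p}{\rho^{p-1}}\,dA\cdot\Bigl(\int|f_n|^2\rho\,dA\Bigr)^{p-1},
\]
with $\rho(w)=\Rewud$; summing over $n$ and using $\sum_n|f_n(w)|^2=k_{w,-2}(w)$ plus the Carleson property of $\rho\,dA$ finishes the job. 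This is both shorter and more transparent about where $p\ge 1$ and the bounded-imaginary-part hypothesis are used.

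\medskip

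\textbf{The case $p\ge 2$ without bounded imaginary part.} Your reference to an ``operator Jensen inequality in the basis of Dirichlet monomials'' does not capture the actual mechanism. The paper writes $\|T_\varphi\|_{S_p}^p=\sum_n\langle T_\varphi^p f_n,f_n\rangle$ in the \emph{eigenbasis} of $T_\varphi$, then uses $T_\varphi$ once to produce a factor $M_\varphi(w)\,dA(w)$ in the trace and applies $\langle T_\varphi^{p-1}K_w,K_w\rangle\ge\langle T_\varphi K_w,K_w\rangle^{p-1}$ to the remaining $p-1\ge 1$ powers. The extra factor of $M_\varphi$ absorbed into the integral is exactly what removes the need for an external Carleson measure, and hence the bounded-imaginary-part assumption.
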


When $p=1,$ namely if we want to know if $C_\varphi$ is Hilbert-Schmidt, things are easier and Hilbert-Schmidt composition operators with symbols $\varphi$ in $\GHZ$ have already
been characterized in \cite{BP21}. This is equivalent to saying that 
$$\int_{\Cud} \zeta''(2\Re(w))M_\varphi(w)dA(w)<+\infty.$$
We generalize this characterization for $C_\varphi\in S_{2m},$ $m\in\mathbb N.$

\begin{theorem}\label{main5}
Let $\varphi\in \mathfrak{G}_0$ and $m\in\mathbb N.$ Then $C_\varphi$ belongs to $S_{2m}$ if and only if
\begin{equation}\label{eq:main4}
\int\limits_{\mathbb{C}_\frac{1}{2}}\cdots\int\limits_{\mathbb{C}_\frac{1}{2}}
\zeta''\left(\overline{w_1}+w_2\right)\cdots\zeta''\left(\overline{w_{m-1}}+w_m\right)\zeta''\left(\overline{w_m}+w_1\right)\prod_{k=1}^{m}M_\varphi(w_k)\,dA(w_k)<\infty.
\end{equation}
\end{theorem}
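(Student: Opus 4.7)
The natural strategy is to realize $C_\varphi^{*}C_\varphi$ in factored form $D^{*}D$ and to evaluate the Schatten norms of the companion operator $DD^{*}$ by the iterated-kernel trace formula. The starting point is the Stanton-type identity proved in \cite{BP21}: for every $f\in\mathcal{H}^{2}$,
$$
\|C_\varphi f\|_{\mathcal{H}^{2}}^{\,2} \;=\; |f(\alpha)|^{2}+\frac{2}{\pi}\int_{\Cud}|f'(w)|^{2}\,M_\varphi(w)\,dA(w),\qquad \alpha:=\varphi(+\infty).
$$
Setting $\mathcal{E}:=L^{2}(\Cud,M_\varphi\,dA)\oplus\CC$ and $Df:=(\sqrt{2/\pi}\,f',\,f(\alpha))$, this identity is exactly $\|Df\|_{\mathcal{E}}=\|C_\varphi f\|_{\mathcal{H}^{2}}$, so $D$ and $C_\varphi$ share their singular values. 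Hence $C_\varphi\in S_{2m}$ if and only if $DD^{*}\in S_{m}$.

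The second step is to compute $DD^{*}$ explicitly using the reproducing kernels $K_{w}(s)=\zeta(\overline w+s)$ and their derivative counterparts $L_{w}(s)=\zeta'(\overline w+s)$, which satisfy $f(w)=\langle f,K_{w}\rangle_{\mathcal{H}^{2}}$ and $f'(w)=\langle f,L_{w}\rangle_{\mathcal{H}^{2}}$. A direct calculation gives
$$
D^{*}(h,\beta) \;=\; \sqrt{2/\pi}\int_{\Cud}h(w)\,L_{w}\,M_\varphi(w)\,dA(w)+\beta\,K_{\alpha},
$$
so that $DD^{*}$ takes the block form $\bigl(\begin{smallmatrix}T & U\\ U^{*} & W\end{smallmatrix}\bigr)$ on $\mathcal{E}$, with $W=\zeta(2\Re\alpha)$ scalar, $U$ the rank-one map $\beta\mapsto\sqrt{2/\pi}\,\beta\,\zeta'(\overline\alpha+\cdot)$, and $T$ the self-adjoint positive integral operator on $L^{2}(\Cud,M_\varphi\,dA)$ with kernel
$$
K_{T}(w,w') \;=\; \tfrac{2}{\pi}\,\zeta''\bigl(\overline{w'}+w\bigr).
$$
Since $DD^{*}$ differs from $T\oplus 0$ by a finite-rank operator, and since $T$ equals the compression of $DD^{*}$ to $L^{2}(\Cud,M_\varphi\,dA)\oplus\{0\}$, we deduce $DD^{*}\in S_{m}$ if and only if $T\in S_{m}$.

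Positivity of $T$ (verified directly by expanding $\zeta''$ into its Dirichlet series and summing termwise) yields the equivalence $T\in S_{m}\Leftrightarrow\operatorname{tr}(T^{m})<\infty$, and the spectral decomposition of $T$ together with Mercer-type considerations on $\Cud$ (using continuity of $K_{T}$ there) produce the iterated-kernel identity
$$
\operatorname{tr}(T^{m}) \;=\; \Bigl(\tfrac{2}{\pi}\Bigr)^{m}\int_{\Cud}\!\!\cdots\!\int_{\Cud}\prod_{k=1}^{m}\zeta''\bigl(\overline{w_{k+1}}+w_{k}\bigr)\,\prod_{k=1}^{m}M_\varphi(w_{k})\,dA(w_{k}),
$$
with indices read cyclically modulo $m$. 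The reindexing $w_{k}\mapsto w_{m+1-k}$ reverses the cyclic ordering and converts this product into the one displayed in \eqref{eq:main4}, giving the claimed equivalence up to the universal positive constant $(2/\pi)^{m}$.

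\textbf{Main obstacle.} The technical core lies in justifying the iterated-kernel identity \emph{without} assuming $K_{T}\in L^{2}(M_\varphi\otimes M_\varphi)$: this $L^{2}$-bound is exactly $C_\varphi\in S_{2}$, strictly stronger than $C_\varphi\in S_{2m}$ when $m\geq 2$. The remedy is to exploit positivity of $T$ and argue throughout in $[0,\infty]$, combining Mercer's theorem on an exhaustion of $\Cud$ by compact subsets with orthonormality of the eigenfunctions in $L^{2}(M_\varphi\,dA)$: both sides of the identity then agree whether finite or infinite. The other routine ingredients — boundedness of $D$, absolute convergence of the integrals defining $D^{*}$, and finiteness of the rank-one corrections $U,W$ — follow directly from the Stanton identity and, where needed, from $\Re\alpha>\tfrac12$ (a minor case split handles the degenerate boundary situation $\Re\alpha=\tfrac12$ by replacing $K_\alpha$ with the bounded evaluation functional $f\mapsto\lim_{\sigma\to\infty}f(\varphi(\sigma))$).
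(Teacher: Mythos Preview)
Your approach and the paper's are two sides of the same coin. The paper works with $T_\varphi:=J^{*}J$ on $(\mathcal D_{-2})_{0}$, where $J$ is the inclusion into $L^{2}(M_\varphi\,dA)$; you work with $JJ^{*}$ (your $T$, up to the factor $2/\pi$) on $L^{2}(M_\varphi\,dA)$. Since $J^{*}J$ and $JJ^{*}$ share their nonzero spectrum, both routes lead to the same iterated-kernel trace. Your block decomposition and the reduction $DD^{*}\in S_{m}\Leftrightarrow T\in S_{m}$ via finite-rank perturbation are correct.

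The substantive difference lies in the direction ``\eqref{eq:main4} finite $\Rightarrow C_\varphi\in S_{2m}$''. The paper does \emph{not} attempt a trace formula valid in $[0,\infty]$; instead it first shows, by an independent geometric argument, that finiteness of \eqref{eq:main4} already forces $C_\varphi$ compact. If not, one finds points $w(k)$ with $M_\varphi(w(k))\geq\delta\,(\Re w(k)-\tfrac12)$; on thin product rectangles $A_k=R_k^{m}$ near the diagonal the real part of the integrand is $\gg(\Re w(k)-\tfrac12)^{-3m}$ (from the order-$3$ pole of $\zeta''$ at $1$), while the submean value property of $M_\varphi$ gives $\int_{R_k}M_\varphi\gg M_\varphi(w(k))\,|R_k|$. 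The $A_k$ being disjoint, this contradicts \eqref{eq:main4}. Only once compactness is secured does the paper unwind $\sum_n\langle T_\varphi^{m}f_n,f_n\rangle$ using the eigenbasis $(f_n)$; the eigenfunction property makes each intermediate integrand equal to $s_n^{m-1}|f_n(w)|^{2}\geq 0$, so Tonelli applies cleanly at every step of the induction.

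Your Mercer/exhaustion strategy can likely be made rigorous, but it carries more weight than the phrase ``argue throughout in $[0,\infty]$'' suggests: the integrand in \eqref{eq:main4} is complex-valued, not pointwise nonnegative, so monotone convergence from $\Omega_j^{m}$ to $(\Cud)^{m}$ is not immediate. You would need to show that $\operatorname{tr}((P_jTP_j)^{m})$ is increasing and converges to $\operatorname{tr}(T^{m})$ (e.g.\ via $s_k(P_jTP_j)\leq s_k(T)$ together with a Fatou argument on the diagonal), and then identify the limit with the Lebesgue integral over $(\Cud)^{m}$---which itself presupposes absolute integrability or some equivalent interpretation. The paper's separate compactness step sidesteps this entirely and is the cleaner route.
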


Our next result is a comparison-type principle. Using the Lindel\"{o}f principle for Green's functions, we will be able to establish geometric conditions on the symbols that 
imply that the associated composition operator is compact or belongs to $S_p.$
To our knowledge this is the first example of a technique that gives geometric conditions that apply to all symbols $\varphi\in\mathfrak{G}_0$. To exemplify this, we focus 
on symbols whose range is contained in angular sectors.

\begin{theorem}\label{thm:angularsectorzero}
 Let $\varphi\in\GHZ$ and assume that $\varphi(\CC_0)\subset\left\{s\in\Cud:\ \left|\arg(s)-\frac 12\right|<\frac{\pi}{2\alpha}\right\}$ for some $\alpha>1.$
 Then $C_\varphi$ is compact. If we further assume that $\alpha\leq 2,$ then $C_\varphi\in S_{2p}$ for any $p>1/(\alpha-1)$.
\end{theorem}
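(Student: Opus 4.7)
The plan is to combine the paper's comparison-type principle with an explicit Green's function computation for the sector
$$S_\alpha := \left\{s \in \Cud : \left|\arg\!\left(s - \tfrac{1}{2}\right)\right| < \tfrac{\pi}{2\alpha}\right\}.$$
Since $f(s) = (s - \tfrac{1}{2})^\alpha$ is a conformal equivalence from $S_\alpha$ onto $\CC_0$, pulling back the Green's function of the right half-plane yields
$$G_{S_\alpha}(w, w_0) = \log\left|\frac{(w - \tfrac{1}{2})^\alpha + \overline{(w_0 - \tfrac{1}{2})^\alpha}}{(w - \tfrac{1}{2})^\alpha - (w_0 - \tfrac{1}{2})^\alpha}\right|,$$
and a first-order expansion at the vertex gives $G_{S_\alpha}(w, w_0) = O(|w - \tfrac{1}{2}|^\alpha)$ inside the sector, with constant depending on $w_0$. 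Applying the comparison-type inequality to the hypothesis $\varphi(\CC_0) \subset S_\alpha$ then produces the pointwise bound
$$M_\varphi(w) \leq G_{S_\alpha}(w, \varphi(+\infty)), \qquad w \in \Cud,$$
with $M_\varphi \equiv 0$ off $S_\alpha$.

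For compactness this is enough: since $\Re w - \tfrac{1}{2} \geq \cos(\pi/(2\alpha))\,|w - \tfrac{1}{2}|$ throughout $S_\alpha$, the limit $\Re w \to \tfrac{1}{2}^+$ forces $|w - \tfrac{1}{2}| \to 0$ (points outside $S_\alpha$ contribute nothing since $M_\varphi$ vanishes there), whence
$$\frac{M_\varphi(w)}{\Re w - \tfrac{1}{2}} \lesssim |w - \tfrac{1}{2}|^{\alpha - 1} \longrightarrow 0,$$
and the criterion \eqref{eq:compact} applies.

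For the Schatten assertion with $\alpha \le 2$ and $p > 1/(\alpha - 1)$, I would feed the Green's function bound into the integrability condition \eqref{eq:main1}. In polar coordinates $w - \tfrac{1}{2} = r e^{i\theta}$ with $|\theta| < \pi/(2\alpha)$, the integrand is $\lesssim r^{(\alpha - 1)p - 1}\, dr\, d\theta$ near the vertex and $\lesssim r^{-(\alpha + 1)p - 1}\, dr\, d\theta$ at infinity, so the majorant integral converges. The technical hurdle is that $S_\alpha$ has unbounded imaginary part, so Theorem \ref{boundedchar} cannot be invoked directly in its sufficiency direction. The plan is to circumvent this by exhausting $S_\alpha$ by bounded-imaginary-part sub-sectors $S_\alpha \cap \{|\Im w| < T\}$, applying Theorem \ref{boundedchar} on each, and passing to the limit with the help of the uniform Green's-function bound; the precise threshold $p > 1/(\alpha - 1)$ and the range restriction $\alpha \le 2$ should emerge from the balance between the vanishing of $G_{S_\alpha}$ at the vertex (rate $|w - \tfrac{1}{2}|^\alpha$) and its decay at infinity (rate $|w - \tfrac{1}{2}|^{-\alpha}$), which controls the uniformity of the approximation.

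The principal obstacle will be this last approximation step: the bounded-imaginary-part truncations must be chosen so that the associated Schatten norms remain uniformly bounded as $T \to \infty$, and it is precisely this uniformity requirement that forces the sharp threshold $p > 1/(\alpha - 1)$ together with the constraint $\alpha \le 2$.
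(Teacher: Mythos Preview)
Your Green's function computation and the compactness argument are correct and match the paper's approach exactly.

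The Schatten part, however, has a genuine gap. You correctly identify that Theorem~\ref{boundedchar} cannot be applied directly because $\varphi$ need not have bounded imaginary part, but your proposed fix---truncating $S_\alpha$ to $S_\alpha\cap\{|\Im w|<T\}$ and ``applying Theorem~\ref{boundedchar} on each''---does not make sense as stated: Theorem~\ref{boundedchar} is a hypothesis on the \emph{symbol} $\varphi$, not on a domain of integration, so there is nothing to apply on a truncated sector. To make this work you would have to approximate $\varphi$ itself by bounded-imaginary-part symbols with uniformly controlled Schatten norms, and no such approximation scheme is available.

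The paper bypasses this entirely by using Corollary~\ref{cor:carleson}~b), which gives a sufficient condition for $C_\varphi\in S_{2p}$ valid for \emph{all} $\varphi\in\GHZ$: one needs
\[
\int_{\Cud}\frac{M_\varphi(w)^p}{\PRewud^{p+2}}\bigl(1+|\Im w|\bigr)^{a(p-1)}\,dA(w)<+\infty
\]
for some $a>1$. The extra weight $(1+|\Im w|)^{a(p-1)}$ is harmless here because the sector geometry forces $|\Im w|$ to be bounded whenever $\Re w$ is bounded, so on $\Cud\setminus\CC_{\sigma_\infty}$ the weight is just a constant and your computation goes through; on $\CC_{\sigma_\infty}$ the estimate~\eqref{eq:superlittlewood} handles the tail. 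This is also where the restriction $\alpha\le 2$ actually comes from: Corollary~\ref{cor:carleson} (via Theorem~\ref{thm:schattencarleson}) requires $p>1$, and $\alpha\le 2$ is precisely the condition guaranteeing $1/(\alpha-1)\ge 1$. Your speculation about a ``balance between vertex and infinity behavior'' is not the source of the constraint.
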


We can strengthen the previous result proving that if the range of the symbol meets the boundary inside a finite union of angular sectors, then the induced composition operator is compact. 

This geometric method also applies to continuity and compactness of composition operators acting on the other Hardy space of Dirichlet series $\mathcal H^p,$ $p\neq 2.$
Recall that for $0< p < \infty$, the Hardy space $\mathcal{H}^p$ of Dirichlet series is defined as the completion of Dirichlet polynomials under the Besicovitch norm (or quasi-norm if $0<p<1$)
\begin{equation*}
\norm{P}_{\mathcal{H}^p}:= \left(\lim\limits_{T\rightarrow\infty} \frac{1}{2T} \int\limits_{-T}^{T} |P(it)|^p\,dt\right)^{\frac{1}{p}}.
\end{equation*}

The characterization of bounded composition operators with Dirichlet series symbols on $\mathcal{H}^p,\,p\notin 2\mathbb N$ is an open and challenging question.
The condition $\varphi\in\mathfrak{G}_0$ is necessary but not sufficient, \cite{QQ20} and there is no known sufficient conditions which may be applied
to a large class of symbols whose range touches the boundary of $\CC_0.$ We provide such a sufficient condition under the assumption that the range of the symbol is 
contained in an angular sector.
\begin{theorem}\label{thm:p<2}
Let $k\in\mathbb N$ and $p\in (0,2k]$.
If the symbol $\varphi\in\mathfrak{G}_0$ maps the right half-plane into an angular sector of the form 
$\Omega=\left\{s\in\Cud:\ |\arg\left(s-\frac{1}{2}\right)|<\frac{p\pi}{4k}\right\}$, then $C_\varphi$ is bounded on $\mathcal{H}^p$. Furthermore, if $q\in (\max(1,\,p),\,2k]$, then the composition operator is compact on $\mathcal{H}^q$.
\end{theorem}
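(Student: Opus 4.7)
The plan is to establish boundedness on $\mathcal{H}^p$ for $p\in(0,2k]$ first, and then upgrade to compactness on $\mathcal{H}^q$ for $q\in(\max(1,p),2k]$. I would first dispatch the endpoint $p=2k$: in this case the sector $\Omega$ degenerates to $\mathbb{C}_{1/2}$, so the hypothesis is just $\varphi\in\mathfrak{G}_0$. On Dirichlet polynomials one has the identity $\|g\|_{\mathcal{H}^{2k}}^{2k}=\|g^{k}\|_{\mathcal{H}^{2}}^{2}$, together with $(f\circ\varphi)^{k}=f^{k}\circ\varphi$. Combined with the Gordon--Hedenmalm theorem, these give
\[
\|C_\varphi f\|_{\mathcal{H}^{2k}}^{2k}
=\|f^{k}\circ\varphi\|_{\mathcal{H}^{2}}^{2}
\leq\|C_\varphi\|_{\mathcal{H}^{2}\to\mathcal{H}^{2}}^{2}\,\|f\|_{\mathcal{H}^{2k}}^{2k},
\]
and boundedness on $\mathcal{H}^{2k}$ extends to the whole space by density of the Dirichlet polynomials.

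For $p<2k$ the sector is strictly narrower than the half-plane, and I would exploit this via a geometric subordination. The power map $w\mapsto\tfrac{1}{2}+(w-\tfrac{1}{2})^{2k/p}$ sends $\Omega$ conformally onto $\mathbb{C}_{1/2}$, so $\tilde\varphi(s):=\tfrac{1}{2}+(\varphi(s)-\tfrac{1}{2})^{2k/p}$ is a single-valued holomorphic map $\mathbb{C}_0\to\mathbb{C}_{1/2}$. Although $\tilde\varphi$ is in general not a Dirichlet series, the subordination still passes through at the level of boundary (Besicovitch) pullback measures: writing $\mu_\varphi$ for the vertical-limit distribution of $\varphi$ on $\overline{\Omega}$ gives $\|f\circ\varphi\|_{\mathcal{H}^{p}}^{p}=\int_{\overline{\Omega}}|f|^{p}\,d\mu_\varphi$, and the change of variable $w=\tfrac{1}{2}+(z-\tfrac{1}{2})^{p/(2k)}$ converts this into an integral against $\mu_{\tilde\varphi}$ on $\mathbb{C}_{1/2}$. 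The first step, applied to $\tilde\varphi$, together with the Carleson-measure interpretation of $\mathcal{H}^{2k}$-boundedness then controls the resulting integral by a constant multiple of $\|f\|_{\mathcal{H}^{p}}^{p}$.

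Finally, for compactness on $\mathcal{H}^{q}$ with $q\in(\max(1,p),2k]$, Theorem~\ref{thm:angularsectorzero} already provides compactness on $\mathcal{H}^{2}$, since the sector corresponds to the parameter $\alpha=2k/p>1$ whenever $p<2k$. Together with the boundedness on $\mathcal{H}^{p}$ and $\mathcal{H}^{2k}$ from the previous steps, standard complex interpolation (using that compactness interpolates with boundedness on the Calder\'on scale) propagates compactness to every intermediate $\mathcal{H}^{q}$; the condition $q>\max(1,p)$ ensures the interpolating pair lies in the Banach regime. The principal technical obstacle will be the subordination step of the middle paragraph: since $\tilde\varphi$ is not a Dirichlet series, one cannot invoke the Gordon--Hedenmalm framework for it directly, and the argument must be conducted at the level of vertical-limit measures on $\mathbb{C}_{1/2}$. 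This is exactly where the classical geometric function theory methods highlighted in the abstract (the Lindel\"of principle for Green's functions and conformal estimates in sectors) enter the proof in an essential way.
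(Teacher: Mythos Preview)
Your proposal has two genuine gaps, and the paper takes a substantially different route to avoid both.

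\textbf{Boundedness for $p<2k$.} You correctly identify that the map $\tilde\varphi(s)=\tfrac12+(\varphi(s)-\tfrac12)^{2k/p}$ is not a Dirichlet series, so neither the Gordon--Hedenmalm theorem nor the Carleson-measure interpretation of $\mathcal H^{2k}$-boundedness applies to it. But you do not actually close this gap: after your change of variable the integrand becomes $|f(\tfrac12+(z-\tfrac12)^{p/(2k)})|^p$, which lies in no Hardy or Dirichlet-series space that a Carleson condition on $\mu_{\tilde\varphi}$ would control, and there is no evident way to recover $\|f\|_{\mathcal H^p}$ from it. The paper bypasses subordination entirely. It first uses the Lindel\"of principle only to obtain the pointwise bound $M_\varphi(w)\ll(\Re w-\tfrac12)^{\alpha}$ with $\alpha=2k/p$, then plugs this into the Stanton formula
\[
\|C_\varphi f\|_{\mathcal H^{2k}}^{2k}=\|C_\varphi(f^k)\|_{\mathcal H^2}^2
=|f^k(\varphi(+\infty))|^2+\frac{2}{\pi}\int_{\mathbb C_{1/2}}|(f^k)'(w)|^2 M_\varphi(w)\,dA(w).
\]
Near the boundary this is dominated by $\int|(f^k)'|^2(\Re w-\tfrac12)^\alpha\,dA$, which a local Bergman embedding of Olsen controls by $\|f^k\|_{\mathcal A_\alpha}^2$; the contractive inclusion $\mathcal H^{p/k}\hookrightarrow\mathcal A_\alpha$ then gives $\|f^k\|_{\mathcal A_\alpha}\leq\|f^k\|_{\mathcal H^{p/k}}=\|f\|_{\mathcal H^p}^k$. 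This is the essential new ingredient your outline is missing.

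\textbf{Compactness.} Your interpolation argument presupposes that the $\mathcal H^p$ spaces form a complex interpolation scale, which is an open problem; you cannot invoke ``compactness interpolates with boundedness'' here. The paper again argues directly: for a weakly null sequence $(f_n)$ in $\mathcal H^q$ it sets $g_n=f_n^k$, repeats the Stanton-formula splitting with a small parameter $\delta>0$, uses the extra factor $(\Re w-\tfrac12)^{2k/p-2k/q}$ to make the near-boundary piece uniformly small, and shows the far piece tends to zero via a Cauchy--Schwarz estimate against the Bergman weights together with pointwise convergence of the Dirichlet coefficients of $g_n$.
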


In the last section we briefly discuss the case of Bergman spaces of Dirichlet series as well as some results on Carleson measures.

\subsection*{Notation}
Throughout the article, we will be using the convention that $C$ denotes a positive constant which may vary from line to line.
We will write that $C = C(x)$ to indicate that the constant depends on a parameter $x$.
If $f,g$ are two real functions defined on the same set $\Omega,$ we will write $f\ll g$ if there exists $C>0$ 
such that for all $x\in\Omega,$ $f(x)\leq C g(x)$ and $f\sim g$ if $f\ll g$ and $g\ll f$.

\subsection*{Acknowledgments}  
We thank Ole Fredrik Brevig and Karl--Mikael Perfekt for providing helpful comments. 

Part of the work has been conducted during research visits of the second author at Université Clermont Auvergne and Lund University (supported from the project Pure Mathematics in Norway – Ren matematikk i Norge funded by the Trond Mohn Foundation). He wants to express his gratitude toward Alexandru Aleman and the previously mentioned departments for their hospitality.

\section{Background material}
\subsection{Schatten classes}
A compact operator $T$ acting on a separable Hilbert space $H$ can be written as
\begin{equation}\label{eq:canonicaldecomposition}
T(x)=\sum\limits_{n\geq 1}s_n\langle x, e_n\rangle h_n,\qquad x\in H,
\end{equation}
where $\{s_n\}_{n\geq 1}$ is the sequence of singular values and $\{e_n\}_{n\geq 1}$ and $\{h_n\}_{n\geq 1}$ are orthonormal sequences. 
In case $T$ is self-adjoint, then $e_n=h_n$ for all $n\geq 1.$ 

For $p>0$ the $S_p$ Schatten class of compact operators $T$ on $H$ is defined as
\begin{equation*}\label{defSchat}
S_p=S_p(H)=\left\{T\in\mathfrak K(H):\,\norm{T}_{S_p}^p:=\sum\limits_{n\geq 1} s_n^p<\infty\right\}.
\end{equation*}
Equivalently (see \cite{BLZ13}), for $p\geq 1,$ a bounded linear operator $T\in\mathfrak L(H)$ belongs to $S_p$ if and only if there exists a positive constant $C$ such that 
$$\sum_n |\langle Te_n,e_n\rangle|^p\leq C$$
for every orthonormal basis $(e_n)$. Furthermore, if $T$ is self-adjoint,
$$\norm{T}_{S_p}^p=\sup \sum_n |\langle Te_n,e_n\rangle|^p$$
the supremum being taken over all orthonormal basis of $H.$


For a positive operator $T$ on $H$ we define the power $T^p,\,p>0$, as
\begin{equation*}\label{eq:power}
T^p(x)=\sum\limits_{n\geq 1}s_n^p\langle x, e_n\rangle e_n,\qquad x\in H.
\end{equation*}
When $p=n\in\mathbb{N}$, the operator $T^n$ is the $n$-th iteration of $T$.
We observe that $T\in S_p$ if and only if $T^p\in S_1$. 
It $T$ is not assumed to be positive, we can still use that $T\in S_p$ iff $|T|^p=(T^*T)^{p/2}\in S_1$ iff $T^*T\in S_{p/2}$.

For a unit vector $x\in H$ and a positive operator $T$, applying  H\"{o}lder's inequality in \eqref{eq:canonicaldecomposition} we obtain the following inequality
\begin{equation}\label{eq: Holder}
\langle T^p(x),x\rangle\geq \left(\langle T(x),x\rangle\right)^p,\qquad p\geq 1.
\end{equation}
For $0<p\leq 1$ the inequality is reversed.

	\subsection{The infinite polytorus and vertical limits}
The infinite polytorus $\TT^\infty$ is defined as the (countable) infinite Cartesian product of copies of the unit  circle $\mathbb{T}$,
$$\mathbb{T}^\infty=\left\{\chi=(\chi_1,\chi_2,\dots): \,\chi_j\in\mathbb{T},\, j\geq 1\right\}.$$
It is a compact abelian group with respect to coordinatewise multiplication. We can identify the Haar measure $m_\infty$ of the infinite polytorus with the countable infinite product measure $m\times m\times\cdots$, where $m$ is the normalized Lebesgue measure of the unit circle.

$\mathbb{T}^\infty$ is isomorphic to the group of characters of $(\mathbb{Q}_+,\cdot)$. Given a point $\chi=(\chi_1,\chi_2,\dots)\in\mathbb{T}^\infty$, the coresponding character $\chi:\mathbb{Q}_+\rightarrow\mathbb{T}$ is the completely multiplicative function on $\mathbb{N}$ such that $\chi(p_j)=\chi_j$, where $\{p_j\}_{j\geq1}$ is the increasing sequence of primes, extended to $\mathbb{Q}_+$ through the relation $\chi(n^{-1})=\overline{\chi(n)}$.

Suppose $f(s)=\sum\limits_{n\geq1}\frac{a_n}{n^s}$ is a Dirichlet series and $\chi(n)$ is a character. The vertical limit function $f_\chi$ is defined as
$$f_\chi(s)=\sum\limits_{n\geq1}\frac{a_n\chi(n)}{n^s}.$$
By Kronecker's theorem \cite{BOH34}, for any $\epsilon>0$, there exists a sequence of real numbers $\{t_j\}_{j\geq1}$ such that $f(s+i t_j)\rightarrow f_\chi(s)$ uniformly on $\mathbb{C}_{\sigma_u(f)+\epsilon}$, where $\sigma_u(f)$ denotes the abscissa of uniform convergence of $f.$

If $f\in \mathcal{H}^2$, then for almost every character $\chi\in \mathbb{T}^\infty$ the vertical limit function $f_\chi$ converges in the right half-plane and has boundary values $f_{\chi}(it)=\lim\limits_{\sigma\rightarrow0^+}f_\chi(\sigma+it)$ for almost every $t\in\mathbb{R}$, \cite{BAY02}. For $\psi(s)=c_0s+\varphi(s)\in\mathfrak{G}$, we set
$$\psi_\chi(s)=c_0s+\varphi_\chi(s),$$
then for every $\chi\in\mathbb{T}^\infty$ we have that
\begin{equation} \label{eq:comprule}
\left(C_\psi(f)\right)_\chi=f_{\chi^{c_0}}\circ\psi_\chi.
\end{equation}
The symbol $\psi$ has boundary values $\psi_{\chi}(it)=\lim\limits_{\sigma\rightarrow0^+}\psi_\chi(\sigma+it)$ for almost every $\chi\in\mathbb{T}^\infty$  and for almost every $t\in\mathbb{R}$.

\subsection{Composition operators on $\mathcal{H}^2$}
O. F. Brevig and K--M. Perfekt \cite[Theorem~1.3.]{BP21} proved the following analogue of Stanton's formula for the Hardy spaces of Dirichlet series:
\begin{equation}\label{eq: Stanton}
\norm{ C_\varphi(f)}_{\mathcal{H}^2}^2=\left|f(\varphi(+\infty))\right|^2+\frac{2}{\pi}\int\limits_{\mathbb{C}_{\frac{1}{2}}}\left|f'(w)\right|^2M_{\varphi}(w) \, dA(w),
\end{equation}
where $\varphi\in\mathfrak{G}_0$ and $f\in\mathcal H^2$. By $f(+\infty)$ we denote the first coefficient $a_1$ of the Dirichlet series  $f(s)=\sum_{n\geq1}\frac{a_n}{n^s}$. We apply the polarization identity in \eqref{eq: Stanton} yielding to
\begin{equation}\label{pol}
\langle C_\varphi(f),C_\varphi(g)\rangle=f(\varphi(+\infty))\overline{g(\varphi(+\infty))}+\frac{2}{\pi}\int\limits_{\mathbb{C}_{\frac{1}{2}}}f'(w)\overline{g'(w)}M_{\varphi}(w) \, dA(w).
\end{equation}

We will make use of two properties of the counting function $M_{\varphi}(w)$ proved in \cite{BP21}, the submean value property and a Littlewood type inequality. Those respectively are
\begin{equation}\label{eq:submean}
M_{\varphi}(w)\leq \frac{1}{|D(w,r)|}\int\limits_{D(w,r)}M_{\varphi}(z) \, dA(z),
\end{equation}
for every disk $D(w,r)\subset \Cud$ that does not contain $\varphi(+\infty)$, and

	\begin{equation}\label{eq:littlewood}
	M_{\varphi}(w)\leq \log\left|\frac{\varphi(+\infty)+\overline{w}-1}{\varphi(+\infty)-w}\right|,\quad\quad w\in\Cud\backslash\{\varphi(+\infty)\}.
	\end{equation}

In Subsection~\ref{Green} we will prove a weaker version of the Littlewood inequality \eqref{eq:littlewood} but sufficient for our purpose.  
The standard technique to prove such inequalities goes through regularity results for conformal maps \cite{BP21, KP22}.
We shall use the following consequence of \eqref{eq:littlewood} (see \cite[Lemma 2.3]{BP21}): for $\sigma_\infty>\Re (\varphi(+\infty))$, 
there exists $C>0$ such that, for all $w\in\mathbb C_{\sigma_\infty},$
\begin{equation}\label{eq:superlittlewood}
M_\varphi(w)\leq C \frac{\Rewud}{\big(1+|\Im(w)|\big)^2}.
\end{equation}

\subsection{Carleson measures}
Let $H$ be a Hilbert space of holomorphic functions on a domain $\Omega$. A Borel measure $\mu$ in $\Omega$
is called a Carleson measure for $H$ if there exists a constant $C>0$ such that, for all $f\in H,$
$$
\int_{\Omega} |f(w)|^2d\mu(w)\leq C\norm{f}_H^2.
$$
We will denote by $C(\mu,H)$ or simply by $C(\mu)$ the infimum of such constants.
For instance, Carleson measures on the Hardy space $H^2(\mathbb{C}_{\frac{1}{2}})$, that consist of holomorphic function $f$ in $\mathbb{C}_{\frac{1}{2}}$ equipped with norm
$$\norm{f}_{H^2(\mathbb{C}_{\frac{1}{2}})} ^2:=\sup\limits_{\sigma>\frac{1}{2}}\int\limits_{\mathbb{R}}\left|f(\sigma+it)\right|^2dt<\infty,$$
are characterized as follows.
\begin{theorem}[\cite{C62}]
	A Borel measure $\mu$ on $\mathbb{C}_{\frac{1}{2}}$ is a Carleson measure for $H^2(\mathbb{C}_{\frac{1}{2}})$ if and only if there exists a constant $D>0$ such that for every square $Q$ with one side $I$ on the line $\{\Re s=\frac{1}{2}\}$
	\begin{equation}\label{carleson}
	\mu(Q)\leq D \left|I\right|.
	\end{equation}
Moreover, there exist two absolute constants $a,b>0$ such that, for all Borel measures $\mu$ on $\Cud,$ denoting by $D(\mu)$
the infimum of the constants $D$ verifying \eqref{carleson}, then $aD(\mu)\leq C(\mu)\leq bD(\mu)$.	
\end{theorem}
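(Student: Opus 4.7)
My plan is to handle the two directions separately, conformally translating the classical Carleson theorem on the upper half-plane (or the disc) to the half-plane $\mathbb{C}_{\frac 12}$.

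For the necessary direction and the upper estimate $D(\mu)\ll C(\mu)$, I would test the Carleson condition against a family of functions modelled on the reproducing kernel. Given a square $Q$ with side $I\subset\{\Re s=\tfrac 12\}$ of length $h$ centered at $\tfrac 12+it_0$, set $w_0=\tfrac 12+h+it_0$ and define
$$f_{w_0}(s)=\frac{1}{s+\overline{w_0}-1}.$$
A direct computation (Plancherel on vertical lines) gives $\norm{f_{w_0}}_{H^2(\Cud)}^2\sim 1/h$, while $|f_{w_0}(w)|\gtrsim 1/h$ for every $w\in Q$. Plugging into the Carleson inequality yields $\mu(Q)/h^2\ll C(\mu)/h$, i.e.\ $\mu(Q)\ll C(\mu)\,|I|$, so $D(\mu)\ll C(\mu)$.

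For the sufficient direction, which is the technical heart, I would introduce the nontangential maximal function
$$f^*(t)=\sup_{w\in\Gamma(t)}|f(w)|,\qquad \Gamma(t)=\bigl\{w\in\Cud:\ |\Im w-t|<\Re w-\tfrac 12\bigr\},$$
and use the standard maximal estimate $\norm{f^*}_{L^2(\mathbb R)}\ll\norm{f}_{H^2(\Cud)}$, available via boundary values and the Hardy--Littlewood maximal theorem. The geometric key is that if $E_\lambda=\{t\in\mathbb R:\ f^*(t)>\lambda\}$ and $\widetilde E_\lambda=\{w\in\Cud:\ |f(w)|>\lambda\}$, then every $w\in\widetilde E_\lambda$ lies in some cone $\Gamma(t)$ with $t\in E_\lambda$, so $\widetilde E_\lambda$ is contained in the union of Carleson boxes sitting over a Whitney decomposition of the open set $E_\lambda\subset\mathbb R$. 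Applying the hypothesis $\mu(Q)\leq D|I|$ to each Whitney box and summing gives $\mu(\widetilde E_\lambda)\ll D(\mu)\,|E_\lambda|$. Distribution-function integration then yields
$$\int_{\Cud}|f(w)|^2\,d\mu(w)=\int_0^\infty 2\lambda\,\mu(\widetilde E_\lambda)\,d\lambda\ll D(\mu)\int_0^\infty 2\lambda\,|E_\lambda|\,d\lambda=D(\mu)\,\norm{f^*}_{L^2}^2\ll D(\mu)\,\norm{f}_{H^2(\Cud)}^2,$$
so $C(\mu)\ll D(\mu)$.

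The main obstacle is the covering/Whitney step in the sufficiency argument: one must organize $\widetilde E_\lambda$ into a countable union of Carleson boxes $Q_j$ over disjoint boundary intervals $I_j$ so that the single-box estimate $\mu(Q_j)\leq D|I_j|$ can be summed cleanly. This requires checking that the aperture of the cones $\Gamma(t)$ is compatible with the side-to-height ratio of the Carleson boxes (a harmless fixed aperture change only affects constants), and verifying the pointwise bound $|f(w)|\leq f^*(t)$ for $w\in\Gamma(t)$ and its contrapositive form. Once this geometric bookkeeping is done, the rest of the argument reduces to standard distribution-function manipulations.
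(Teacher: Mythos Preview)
The paper does not prove this statement at all: it is quoted as background material with a citation to Carleson's original paper \cite{C62}, so there is no ``paper's own proof'' to compare against. Your sketch is the standard proof of Carleson's embedding theorem transported to $\Cud$, and both directions are essentially correct.

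One small point worth tightening in the sufficiency direction: the assertion ``every $w\in\widetilde E_\lambda$ lies in \emph{some} cone $\Gamma(t)$ with $t\in E_\lambda$'' is true but too weak to conclude, since $\bigcup_{t\in E_\lambda}\Gamma(t)$ is not covered by the Carleson boxes over the components of $E_\lambda$. The clean observation is that if $|f(w)|>\lambda$ then $f^*(t)>\lambda$ for \emph{every} $t$ in the shadow interval $(\Im w-(\Re w-\tfrac12),\,\Im w+(\Re w-\tfrac12))$, so this whole interval lies in $E_\lambda$; hence $\widetilde E_\lambda$ is contained in the tent $T(E_\lambda)=\{w:\text{shadow of }w\subset E_\lambda\}$, which decomposes as the disjoint union of triangles $T(I_j)$ sitting inside the Carleson squares $Q_j$ over the components $I_j$ of $E_\lambda$. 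No Whitney decomposition is needed. With that adjustment the distribution-function computation you wrote goes through verbatim and gives $C(\mu)\ll D(\mu)$.
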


\subsection{Weighted Hilbert spaces of Dirichlet series}
Our main strategy (inspired by \cite{LZ92}) to obtain the membership of $C_\varphi$ to $S_{2p}$
is to derive it from the membership to $S_p$ of an associated Toeplitz operator defined on another space of Dirichlet series.
We now introduce this class of spaces.
For $a\leq 1$ we define the weighted Hilbert space $\mathcal{D}_a$ of Dirichlet series as 
$$\mathcal{D}_a=\left\{f(s)=\sum_{n\geq1}\frac{a_n}{n^s}:\norm{f}_a^2=|a_1|^2+\sum_{n\geq2}|a_n|^2\log(n)^a<\infty \right\}.$$
The reproducing kernel $k_{w,-a},\,a\geq 0$ of $\left(\mathcal{D}_{-a}\right)_0$ (space mod constants) at a point $w\in\mathbb{C}_{\frac{1}{2}}$ is given by 

\begin{equation}\label{rep}
k_{w,-a}(s)=\sum\limits_{n>1}\frac{(\log n)^a}{n^{s+\overline{w}}}=\frac{\Gamma(1+a)}{\left(\overline{w}+s-1\right)^{1+a}}+ E_a(s+\overline{w}), \qquad s\in\mathbb{C}_\frac{1}{2},
\end{equation}
where $E_a(\cdot)$ is a holomorphic function on $\mathbb{C}_{0}$, \cite[Lemma~5.1]{KP22}. Observe that
$$\|k_{w,-a}\|_{-a}^2\sim_{\Re(w)\to\frac 12}\frac1{\PRewud^{a+1}}.$$

The local embedding theorem, \cite{HLS97}, states that there exists an absolute constant $C>0$ such that for every $f\in\mathcal{H}^2$
\begin{equation}\label{eq:localembending}
\frac{1}{T}\int\limits_{-T}^T\left|f\left(\frac{1}{2}+it\right)\right|^2\,dt\leq C \norm{f}^2_{\mathcal{H}^2},\qquad T>0.
\end{equation}

A direct application of \eqref{eq:localembending} is that for every $f\in\left(\mathcal{D}_{-a}\right)_0,\,a>0,$ we have the following embedding
\begin{equation}\label{eq:localbergmanemben}
\frac{1}{T}\int\limits_{-T}^T\int\limits_{\frac{1}{2}}^\infty\left|f(\sigma+it)\right|^2\left(\sigma-\frac{1}{2}\right)^{a-1}\,d\sigma\,dt\leq C(a) \norm{f}^2_{-a}.
\end{equation}
In particular, if $A$ is a subset of $\Cud$ with bounded imaginary part, then $\mathbf 1_A\left(\Re (\cdot)-\frac12\right)^{a-1} dA$
is a Carleson measure for $(\mathcal D_{-a})_0.$ More generally, if $\kappa:[0,+\infty)\to[0,+\infty)$ is integrable, bounded and decreasing then $\kappa(\left|\Im(\cdot)\right|)\left(\Re (\cdot)-\frac12)\right)^{a-1}dA$
is a Carleson measure for $(\mathcal D_{-a})_0.$

The differentiation operator $D(f)=f'$ is an isometry between $\mathcal{H}^2_0$ and $\left(\mathcal{D}_{-2}\right)_0$.
By \eqref{pol} the composition operator $C_\varphi$ belongs to $S_{2p}(\mathcal H^2),\,p>0$ if and only if 
$\left(D\circ C_\varphi\circ D^{-1}\right)^*D\circ C_\varphi\circ D^{-1}\in S_{p}((\mathcal D_{-2})_0)$
if and only if the operator $T_\varphi:\left(\mathcal{D}_{-2}\right)_0\rightarrow \left(\mathcal{D}_{-2}\right)_0$ defined as
\begin{equation}\label{tf}
\langle T_\varphi(f),g\rangle=\int\limits_{\mathbb{C}_{\frac{1}{2}}}f(w)\overline{g(w)}M_\varphi(w)\,dA(w)
\end{equation}
belongs to $S_{p}((\mathcal D_{-2})_0)$.


\section{Composition operators belonging to Schatten classes}
\subsection{Schatten class and Carleson measures}

We shall divide the proof of Theorem \ref{boundedchar} into several parts. We first handle the case $p>1$ in a more general context
by giving a necessary and a sufficient condition for $C_\varphi$ to belong to $S_{2p}$. Both conditions involve $M_\varphi$
and Carleson measures. At this stage, we do not assume anything on the imaginary part of $\varphi.$

\begin{theorem}\label{thm:schattencarleson}
 Let $p>1$ and $\varphi\in\GHZ$. 
 \begin{enumerate}[a)]
  \item Assume that $C_\varphi\in S_{2p}$ and let $\mu$ be a Carleson measure for $(\mathcal D_{-2})_0.$ Then
  $$\int_{\Cud} \frac{(M_\varphi(w))^p \zeta''(2\Re(w))}{\PRewud^p} d\mu(w)<+\infty.$$
  \item Assume that there exists $\rho:\varphi(\mathbb C_0)\to(0,+\infty)$ such that $\rho dA$ is a Carleson measure for $(\mathcal D_{-2})_0$
  and that
\begin{equation}\label{eq:sufficientcarleson}
  \int_{\varphi(\mathbb C_0)}\frac{(M_\varphi(w))^p\zeta''(2\Re(w))}{\rho(w)^{p-1}}dA(w)<+\infty.
\end{equation}
  Then $C_\varphi\in S_{2p}.$
 \end{enumerate}
\end{theorem}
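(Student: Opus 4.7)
The plan is to reduce both parts to the associated positive operator $T_\varphi$ on $(\mathcal D_{-2})_0$ introduced in \eqref{tf}, using the equivalence $C_\varphi\in S_{2p}(\mathcal H^2)\iff T_\varphi\in S_p((\mathcal D_{-2})_0)$ noted in the background. Throughout, I exploit that the reproducing kernel at $w$ of $(\mathcal D_{-2})_0$ is $k_{w,-2}$ with $\|k_{w,-2}\|_{-2}^2=\zeta''(2\Re(w))$, and that for any orthonormal basis $(e_n)$ of $(\mathcal D_{-2})_0$ one has $\sum_n|e_n(w)|^2=\zeta''(2\Re(w))$.

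For part (a), I would apply the operator inequality \eqref{eq: Holder} to $T_\varphi$ at the normalized reproducing kernel $\tilde k_{w,-2}$, obtaining $\langle T_\varphi\tilde k_{w,-2},\tilde k_{w,-2}\rangle^p\leq\langle T_\varphi^p\tilde k_{w,-2},\tilde k_{w,-2}\rangle$. Multiplying by $\zeta''(2\Re(w))$ and integrating against $d\mu(w)$, the right-hand side becomes $\int\langle T_\varphi^p k_{w,-2},k_{w,-2}\rangle\,d\mu(w)$, which by the spectral decomposition $T_\varphi^p=\sum s_n^p e_n\otimes e_n$ equals $\sum_n s_n^p\int|e_n(w)|^2\,d\mu(w)$. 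The Carleson hypothesis bounds each $\int|e_n|^2\,d\mu$ by $C(\mu)\|e_n\|_{-2}^2=C(\mu)$, and altogether this is $\leq C(\mu)\|T_\varphi\|_{S_p}^p<\infty$. The second step is to lower bound the left-hand side: using the expansion \eqref{rep} with $a=2$, I would show $|k_{w,-2}(z)|^2\gtrsim(\Re(w)-\tfrac12)^{-6}$ on a disk $D(w,c(\Re(w)-\tfrac12))$ contained in $\Cud\setminus\{\varphi(+\infty)\}$, so that the sub-mean value property \eqref{eq:submean} gives
\begin{equation*}
\int|k_{w,-2}(z)|^2 M_\varphi(z)\,dA(z)\gtrsim(\Re(w)-\tfrac12)^{-4}M_\varphi(w).
\end{equation*}
Dividing by $\|k_{w,-2}\|_{-2}^2\sim(\Re(w)-\tfrac12)^{-3}$ yields $\langle T_\varphi\tilde k_{w,-2},\tilde k_{w,-2}\rangle\gtrsim M_\varphi(w)/(\Re(w)-\tfrac12)$, which is the desired bound.

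For part (b), I would use the characterization recalled in the excerpt: $T_\varphi\in S_p$ iff $\sum_n|\langle T_\varphi e_n,e_n\rangle|^p$ is uniformly bounded over every orthonormal basis. For any such $(e_n)$, $\langle T_\varphi e_n,e_n\rangle=\int|e_n|^2 M_\varphi\,dA$. Writing $M_\varphi=(M_\varphi/\rho^{(p-1)/p})\cdot\rho^{(p-1)/p}$ on $\varphi(\CC_0)$ and applying H\"older's inequality with exponents $p$ and $p/(p-1)$ gives
\begin{equation*}
\langle T_\varphi e_n,e_n\rangle^p\leq\left(\int|e_n|^2\frac{M_\varphi^p}{\rho^{p-1}}\,dA\right)\left(\int|e_n|^2\rho\,dA\right)^{p-1}\leq C(\rho)^{p-1}\int|e_n|^2\frac{M_\varphi^p}{\rho^{p-1}}\,dA,
\end{equation*}
using the Carleson property of $\rho\,dA$ on the second factor. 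Summing in $n$ and exchanging sum with integral (by Tonelli), the identity $\sum_n|e_n(w)|^2=\zeta''(2\Re(w))$ turns the bound into
\begin{equation*}
\sum_n\langle T_\varphi e_n,e_n\rangle^p\leq C(\rho)^{p-1}\int_{\varphi(\CC_0)}\frac{(M_\varphi(w))^p\zeta''(2\Re(w))}{\rho(w)^{p-1}}\,dA(w),
\end{equation*}
which is finite by \eqref{eq:sufficientcarleson} and independent of the basis, giving $T_\varphi\in S_p$.

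The main obstacle is the lower bound in part (a): controlling the error term $E_2$ in the expansion \eqref{rep} so that the estimate $|k_{w,-2}(z)|^2\gtrsim(\Re(w)-\tfrac12)^{-6}$ is clean near the line $\Re(w)=\tfrac12$, and ensuring the sub-mean disk can be chosen to avoid $\varphi(+\infty)$. For $w$ in a fixed strip away from $\Re(w)=\tfrac12$ or from $\varphi(+\infty)$, one would absorb the problem using \eqref{eq:superlittlewood} and the Carleson bound on $\mu$ of such strips to show the contribution is uniformly under control.
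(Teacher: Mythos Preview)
Your proposal is correct and follows essentially the same route as the paper's proof: part (b) is identical, and in part (a) the paper packages the bound $\int_{\Cud}\langle T_\varphi^p k_{w,-2},k_{w,-2}\rangle\,d\mu(w)<\infty$ as $\|I_\mu\circ T_\varphi^{p/2}\|_{S_2}^2<\infty$ via the ideal property, which is equivalent to your direct spectral expansion, and then performs the same sub-mean-value lower bound on $\Cud\setminus\mathbb C_{\sigma_\infty}$ together with the clean-up on $\mathbb C_{\sigma_\infty}$ via \eqref{eq:superlittlewood} that you outline at the end.
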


\begin{proof}
 We start by proving a). Let $I_\mu$ be the inclusion operator from $(\mathcal D_{-2})_0$ into $L^2(\Cud,\mu)$ which is bounded since $\mu$ is Carleson. Moreover, assuming $C_\varphi\in S_{2p}$ or, equivalently,
$T_\varphi\in S_p$, we get by the ideal property of Schatten classes that $I_\mu\circ T_\varphi^{p/2}\in S_2.$ Let $(f_n)$ be any orthonormal sequence of $(\mathcal D_{-2})_0.$
One can write
\begin{align*}
\infty>\|I_\mu\circ T_\varphi^{p/2}\|_{S_2}^2&=\sum_{n\geq 1}\|T_\varphi^{p/2}(f_n)\|_{L^2(\mu)}^2 \\
&=\sum\limits_{n\geq 1}\int\limits_{\mathbb{C}_\frac{1}{2}}\left|\langle T_\varphi^{\frac{p}{2}}\left(f_n\right),k_{w,-2}\rangle\right|^2\,d\mu(w)\\
&=\int\limits_{\mathbb{C}_\frac{1}{2}} \norm{T_\varphi^{\frac{p}{2}}\left(k_{w,-2}\right)}^2_{(\mathcal D_{-2})_0}\,d\mu(w)\\
&=\int\limits_{\mathbb{C}_\frac{1}{2}} \langle T_\varphi^p\left(k_{w,-2}\right),k_{w,-2}\rangle \,d\mu(w)\\
&\geq \int\limits_{\mathbb{C}_\frac{1}{2}} \left(\langle T_\varphi\left(K_{w,-2}\right),K_{w,-2}\rangle\right)^p\norm{k_{w,-2}}^2 \,d\mu(w)
\end{align*}
by \eqref{eq: Holder}, where $K_{w,-2}$ is the normalized reproducing kernel of $(\mathcal D_{-2})_0$ at $w$. Observe that the 
exchange of integral and sum is justified by Tonelli's theorem.  Fix $\sigma_\infty>\Re \varphi(+\infty)$. By \eqref{tf},
\begin{align*}
\|I_\mu\circ T_\varphi^{p/2}\|_{S_2}^2&\geq \int_{\Cud}\left(\int_{\Cud}|K_{w,-2}(z)|^2M_\varphi(z)dA(z)\right)^p\norm{k_{w,-2}}^2 \,d\mu(w)\\
&\geq \int_{\Cud\setminus\mathbb{C}_{\sigma_\infty}}\left(\int_{D\left(w,\frac 12\PRewud\right)}|K_{w,-2}(z)|^2M_\varphi(z)dA(z)\right)^p\norm{k_{w,-2}}^2 \,d\mu(w).
\end{align*}
By \eqref{rep} one can estimate the behaviour of $K_{w,-2}(z)$ in the disc $D\left(w,\frac{\Re w-\frac{1}{2}}{2}\right),$ for $\Re w\leq \sigma_\infty$ to obtain
\begin{align*}
\int_{D\left(w,\frac 12\PRewud\right)}|K_{w,-2}(z)|^2M_\varphi(z)dA(z)&\gg \int_{D\left(w,\frac 12\PRewud\right)}
\frac{M_\varphi(z)}{\PRewud^3 }dA(z)\\
&\gg \frac{M_\varphi(w)}{\Rewud}
\end{align*}
where the last inequality follows from the submean value property of the mean counting function \eqref{eq:submean}. Taking into account the value of $\norm{k_{w,-2}}$ we get 
 $$\int_{\Cud\setminus\mathbb{C}_{\sigma_\infty}} \frac{(M_\varphi(w))^p \zeta''(2\Re(w))}{\PRewud^p} d\mu(w)<+\infty.$$
Finally, \eqref{eq:superlittlewood} yields 
$$\int_{\mathbb{C}_{\sigma_\infty}} \frac{(M_\varphi(w))^p \zeta''(2\Re(w))}{\PRewud^p} d\mu(w)\ll \int_{\mathbb{C}_{\sigma_\infty}} \zeta''(2\Re(w))d\mu(w)\ll\int_{\mathbb{C}_{\sigma_\infty}}\left|2^{-w}\right|^2d\mu(w) <+\infty.$$
Conversely, assume that \eqref{eq:sufficientcarleson} holds and let $q$ be the conjugate exponent of $p$. 
Let $(f_n)$ be any orthonormal basis of $(\mathcal D_{-2})_0.$ Then
\begin{align*}
\sum_{n\geq 1}\langle T_\varphi(f_n),f_n\rangle^p&=\sum_{n\geq 1}\left(\int_{\Cud}|f_n(w)|^2 M_\varphi(w) dA(w)\right)^p\\
&=\sum_{n\geq 1}\left(\int_{\varphi(\mathbb C_0)}\frac{|f_n(w)|^{2/p} M_\varphi(w)}{\rho(w)^{1/q}} |f_n(w)|^{2/q}\rho(w)^{1/q}dA(w)\right)^p\\
&\leq \sum_{n\geq 1}\left(\int_{\varphi(\mathbb C_0)}\frac{|f_n(w)|^{2} M_\varphi(w)^p}{\rho(w)^{p/q}}dA(w)\right)
\left(\int_{\varphi(\mathbb C_0)} |f_n(w)|^2 \rho(w)dA(w)\right)^{p/q}.
\end{align*}
Since $\rho dA$ is a Carleson measure and since $\sum_n |f_n(w)|^2=k_{w,-2}(w)$ for any orthonormal basis of $(\mathcal D_{-2})_0,$
we get
\begin{align*}
\sum_{n\geq 1}\langle T_\varphi(f_n),f_n\rangle^p&\ll \int_{\varphi(\mathbb C_0)}\frac{(M_\varphi(w))^p k_{w,-2}(w)}{\rho(w)^{p-1}}dA(w)\\
&\ll \int_{\varphi(\mathbb C_0)}\frac{(M_\varphi(w))^p\zeta''(2\Re(w))}{\rho(w)^{p-1}}dA(w).
\end{align*}
Hence, $T_\varphi$ belongs to $S_p$.
\end{proof}

In view of the above theorem, the ideal case would be to choose a function $\rho:\varphi(\mathbb C_0)\to (0,+\infty)$ such that
$\rho dA$ is a Carleson measure for $(\mathcal D_{-2})_0$ and 
$$\frac1{\rho(w)^{p-1}}=\frac{\rho(w)}{\PRewud^p},\ w\in\varphi(\mathbb C_0).$$
This yields to $\rho(w)=\Rewud$. Now if $\varphi$ has bounded imaginary part, then the embedding inequality
implies that $\mathbf 1_{\varphi(\mathbb C_0)}\PRewud dA$ is a Carleson measure for $(\mathcal D_{-2})_0.$ 
This gives the way to the case $p>1$ of Theorem \ref{boundedchar}.
\begin{corollary}\label{cor:schatten}
 Let $p>1$ and $\varphi\in \GHZ$ with bounded imaginary part. Then 
 $C_\varphi$ belongs to $S_{2p}$ if and only if 
\begin{equation*}
 \int\limits_{\mathbb{C}_\frac{1}{2}}\frac{\left(M_\varphi(w)\right)^{p}}{\left(\Re w-\frac{1}{2}\right)^{p+2}}\, dA(w)<+\infty.
\end{equation*}
\end{corollary}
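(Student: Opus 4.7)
The plan is to apply Theorem \ref{thm:schattencarleson} with the single weight $\rho(w)=\Rewud$, and check that under the bounded imaginary part hypothesis the two conditions produced by parts (a) and (b) of that theorem both collapse to \eqref{eq:main1}.

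First I would establish the Carleson property. Since $\varphi$ has bounded imaginary part, $\varphi(\mathbb{C}_0)$ is contained in a horizontal strip $\Cud\cap\{|\Im w|\leq M\}$. The embedding \eqref{eq:localbergmanemben} applied with $a=2$ (and the observation immediately following it) then shows that $d\mu:=\mathbf{1}_{\varphi(\mathbb{C}_0)}\PRewud\, dA$ is a Carleson measure for $(\mathcal{D}_{-2})_0$, so this choice of $\rho$ is admissible in both directions of Theorem \ref{thm:schattencarleson}.

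Next, for sufficiency, I would invoke Theorem \ref{thm:schattencarleson}(b) with $\rho(w)=\Rewud$, which reduces the question to verifying
\begin{equation*}
\int_{\varphi(\mathbb C_0)}\frac{(M_\varphi(w))^p\,\zeta''(2\Re w)}{\PRewud^{p-1}}\,dA(w)<+\infty.
\end{equation*}
Near the boundary $\Re w=\tfrac12$ the Laurent expansion of $\zeta$ at $1$ gives $\zeta''(2\Re w)\sim \tfrac{1}{4}\PRewud^{-3}$, so the integrand there is comparable to $(M_\varphi(w))^p/\PRewud^{p+2}$, matching \eqref{eq:main1} exactly. Away from the boundary I would split at some $\sigma_\infty>\Re\varphi(+\infty)$ and use \eqref{eq:superlittlewood} together with the bounded imaginary part of $\varphi$ to estimate the tail by $\int_{\mathbb{C}_{\sigma_\infty}}\zeta''(2\Re w)\,dA(w)<\infty$, which converges. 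This gives the implication \eqref{eq:main1} $\Rightarrow$ $C_\varphi\in S_{2p}$.

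For necessity, I would apply Theorem \ref{thm:schattencarleson}(a) with the Carleson measure $d\mu=\mathbf{1}_{\varphi(\mathbb{C}_0)}\PRewud\,dA$ constructed above; the conclusion reads
\begin{equation*}
\int_{\varphi(\mathbb C_0)}\frac{(M_\varphi(w))^p\,\zeta''(2\Re w)}{\PRewud^{p-1}}\,dA(w)<+\infty,
\end{equation*}
which is the same quantity as before. Using again $\zeta''(2\Re w)\sim\PRewud^{-3}$ near the line $\Re w=\tfrac12$, this bounds the contribution to \eqref{eq:main1} from any strip $\tfrac12<\Re w<\sigma_\infty$; the tail over $\Re w\geq\sigma_\infty$ is automatic from \eqref{eq:superlittlewood} and the bounded imaginary part, since $(M_\varphi(w))^p/\PRewud^{p+2}\ll \PRewud^{-2}$ there and $\varphi(\mathbb C_0)$ lies in a strip. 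I do not expect a genuine obstacle here; the only delicate point is keeping track of the two regimes of $\zeta''$ (pole-like near $2\Re w=1$, and bounded with exponential decay at infinity) and verifying that the $M_\varphi$--free tail always converges thanks to the bounded imaginary part hypothesis.
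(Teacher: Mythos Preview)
Your proposal is correct and follows essentially the same route as the paper: choose $\rho(w)=\Rewud$, use the bounded-imaginary-part hypothesis together with \eqref{eq:localbergmanemben} to make $\mathbf 1_{\varphi(\CC_0)}\rho\,dA$ a Carleson measure for $(\mathcal D_{-2})_0$, apply both parts of Theorem~\ref{thm:schattencarleson}, and then replace $\zeta''(2\Re w)$ by $\PRewud^{-3}$ via a near/far splitting. One minor clean-up: your tail bound ``$\int_{\CC_{\sigma_\infty}}\zeta''(2\Re w)\,dA(w)<\infty$'' is literally divergent over the full half-plane, so either keep the factor $M_\varphi(w)^p$ (and use the $(1+|\Im w|)^{-2}$ decay from \eqref{eq:superlittlewood}, as the paper does, which in fact works for \emph{any} $\varphi\in\GHZ$), or explicitly restrict the integral to the horizontal strip containing $\varphi(\CC_0)$.
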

\begin{proof}
 Our discussion actually shows that, under the assumptions of the corollary, $C_\varphi\in S_{2p}$ if and only if
 $$\int\limits_{\mathbb{C}_\frac{1}{2}}\frac{\left(M_\varphi(w)\right)^{p}}{\left(\Re w-\frac{1}{2}\right)^{p-1}}\zeta''(2\Re(w))\, dA(w)<+\infty.$$
 It remains to show that this is equivalent to \eqref{eq:main1}. Let $\sigma_\infty=2\Re \varphi(+\infty).$
 Then for $w\in\Cud\backslash \mathbb C_{\sigma_\infty}, $
 $$\frac{1}{\PRewud^3}\ll\zeta''(2\Re(w))\ll \frac{1}{\PRewud^3}.$$
We may conclude if we are able  to prove that for any $\varphi\in\GHZ$,
$$\int_{\mathbb C_{\sigma_\infty}}\frac{M_\varphi(w)^p}{\PRewud^{p-1}}\zeta''(2\Re(w))dA(w)<+\infty
\textrm{ and }
\int_{\mathbb C_{\sigma_\infty}}\frac{M_\varphi(w)^p}{\PRewud^{p+2}}dA(w)<+\infty.$$
Both of these properties follow from \eqref{eq:superlittlewood}.
\end{proof}

When $\varphi$ does not have bounded imaginary part, there are still interesting Carleson measures for $(\mathcal D_{-2})_0,$ for instance
$\PRewud/(1+|\Im(w)|)^adA$
for any $a>1.$ This yields to the following result.

\begin{corollary}\label{cor:carleson}
 Let $p>1$, let $\varphi\in \GHZ$ and let $a>1.$ 
 \begin{enumerate}[a)]
  \item If $C_\varphi$ belongs to $S_{2p},$ then
  $$\int_{\Cud}\frac{M_\varphi(w)^p}{\PRewud^{p+2}(1+|\Im(w)|)^a}dA(w)<+\infty.$$
  \item Assume that 
  $$\int_{\Cud}\frac{M_\varphi(w)^p}{\PRewud^{p+2}}(1+|\Im(w)|)^{a(p-1)}dA(w)<+\infty.$$
  Then $C_\varphi\in S_{2p}.$
 \end{enumerate}
\end{corollary}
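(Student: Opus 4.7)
Both parts are direct applications of Theorem \ref{thm:schattencarleson} to the weight
$$\rho(w)=\frac{\Rewud}{(1+|\Im(w)|)^a},$$
so the first step is to verify that $\rho\, dA$ is a Carleson measure for $(\mathcal D_{-2})_0$. This follows from the discussion after \eqref{eq:localbergmanemben} applied with the space $(\mathcal D_{-2})_0$ (thus exponent $a=2$ in that embedding, giving the factor $\Rewud$) and the weight $\kappa(t)=(1+t)^{-a}$, which is bounded, decreasing, and integrable precisely because $a>1$.

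For part a), I would invoke Theorem \ref{thm:schattencarleson}(a) with $\mu=\rho\,dA$ to obtain
$$\int_{\Cud}\frac{M_\varphi(w)^p\,\zeta''(2\Re(w))}{\PRewud^{p-1}(1+|\Im(w)|)^a}\,dA(w)<+\infty.$$
I would then split $\Cud$ along $\mathbb C_{\sigma_\infty}$ for some $\sigma_\infty>2\Re\varphi(+\infty)$. On $\Cud\setminus\mathbb C_{\sigma_\infty}$ the estimate $\zeta''(2\Re(w))\sim \PRewud^{-3}$ translates the previous finiteness into the desired bound. On $\mathbb C_{\sigma_\infty}$ the Littlewood estimate \eqref{eq:superlittlewood} controls the target integrand directly, exactly as in the last lines of the proof of Corollary \ref{cor:schatten}.

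For part b), I would apply Theorem \ref{thm:schattencarleson}(b) with the same $\rho$. The sufficient condition \eqref{eq:sufficientcarleson} becomes
$$\int_{\varphi(\CC_0)}\frac{M_\varphi(w)^p\,\zeta''(2\Re(w))\,(1+|\Im(w)|)^{a(p-1)}}{\PRewud^{p-1}}\,dA(w)<+\infty.$$
Splitting again at $\mathbb C_{\sigma_\infty}$, on $\Cud\setminus\mathbb C_{\sigma_\infty}$ the factor $\zeta''(2\Re(w))$ behaves like $\PRewud^{-3}$, so this integral is dominated by the hypothesis. On $\mathbb C_{\sigma_\infty}$ one again invokes \eqref{eq:superlittlewood}: $\zeta''(2\Re(w))$ decays like $|2^{-w}|^2$, $M_\varphi(w)$ decays like $\Rewud/(1+|\Im(w)|)^2$, and the polynomial factor $(1+|\Im(w)|)^{a(p-1)}$ is harmless against this exponential decay in $\Re(w)$ combined with the $(1+|\Im(w)|)^{-2p}$ from $M_\varphi^p$, provided we simply choose $\sigma_\infty$ large enough (which is permitted since $a(p-1)-2p$ is a fixed real number, and the $\Im$-integration then converges once we demand, e.g., $\sigma_\infty$ so large that $|2^{-w}|^2$ enforces convergence after integrating in $\Im(w)$).

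The only subtle point is bookkeeping in the tail region $\mathbb C_{\sigma_\infty}$: one must verify that neither side of the equivalence depends on the behaviour of $\varphi$ at infinity, and that the extra polynomial weight $(1+|\Im(w)|)^{a(p-1)}$ in part b) does not spoil convergence there. This is the step I expect to require the most care, but it is handled uniformly by \eqref{eq:superlittlewood} together with the exponential decay of $\zeta''(2\Re(w))$ as $\Re(w)\to+\infty$.
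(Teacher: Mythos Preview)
Your approach matches the paper's: apply Theorem~\ref{thm:schattencarleson} with $\rho(w)=\PRewud/(1+|\Im(w)|)^a$ and $d\mu=\rho\,dA$, and verify that $\rho\,dA$ is Carleson via the remark after \eqref{eq:localbergmanemben}. Part a) is handled exactly as you describe.

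For part b), however, your tail argument is both unnecessary and incorrect. The error is in the sentence ``the $\Im$-integration then converges once we demand $\sigma_\infty$ so large that $|2^{-w}|^2$ enforces convergence after integrating in $\Im(w)$'': since $|2^{-w}|^2=2^{-2\Re(w)}$ does not depend on $\Im(w)$, enlarging $\sigma_\infty$ gives no extra decay in the imaginary direction. After applying \eqref{eq:superlittlewood} your integrand over $\CC_{\sigma_\infty}$ is comparable to $\PRewud\,\zeta''(2\Re(w))\,(1+|\Im(w)|)^{a(p-1)-2p}$, and the $\Im$-integral diverges whenever $a(p-1)\geq 2p-1$, which is certainly allowed under the hypotheses.

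The fix---and this is what the paper does---is to observe that no splitting is needed at all in b). The inequality $\zeta''(2\Re(w))\ll\PRewud^{-3}$ holds on the whole of $\Cud$ (only the reverse inequality fails far from the pole). Hence the sufficient condition \eqref{eq:sufficientcarleson} is dominated pointwise by the integrand in the hypothesis of b), over all of $\Cud$, and finiteness follows directly. You already used this on the strip; simply use it everywhere and drop the separate tail analysis.
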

 \begin{proof}
  This follows from Theorem \ref{thm:schattencarleson} with $\rho(w)=\PRewud/(1+|\Im(w)|)^a$ and $d\mu=\rho dA.$
  Again we can replace everywhere $\zeta''(2\Re(w))$ by $\PRewud^{-3}$ since for a),
  $$\int_{\mathbb C_{\sigma_\infty}} \frac{M_\varphi(w)^p}{\PRewud^{p+2}(1+|\Im(w)|)^a}dA(w)<+\infty$$
  and for b), $\zeta''(2\Re(w))\ll \PRewud^{-3}$ is valid throughout $\Cud.$
 \end{proof}
 
We now prove that \eqref{eq:main1} remains necessary for $p\geq 2$ without any assumption on $\varphi.$
\begin{theorem}\label{thm:necessaryp>2}
 Let $p\geq 2$ and $\varphi\in\GHZ.$ Assume that $C_\varphi\in S_{2p}.$ Then
$$ \int\limits_{\mathbb{C}_\frac{1}{2}}\frac{\left(M_\varphi(w)\right)^{p}}{\left(\Re w-\frac{1}{2}\right)^{p+2}}\, dA(w)<+\infty.$$
\end{theorem}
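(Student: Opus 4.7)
The plan is to reduce to integer exponents via the trace formula of Theorem~\ref{main5} and to use a diagonal-localization estimate. Fix an integer $m\geq 2$ and assume $C_\varphi\in S_{2m}$; since $T_\varphi\geq 0$, Theorem~\ref{main5} gives
\[
\operatorname{tr}(T_\varphi^m) = \int_{\Cud^m}\prod_{k=1}^{m}\zeta''(\overline{w_k}+w_{k+1})\,M_\varphi(w_k)\,dA(w_k) <+\infty,
\]
indices modulo $m$. The goal is to bound this trace below by $\int M_\varphi^m/(\Re w-\tfrac12)^{m+2}\,dA$.

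The main step is to restrict the iterated integral to the ``diagonal tube''
\[
\Omega = \bigl\{(w_1,\ldots,w_m):\, |w_i-w_1|<c(\Re w_1-\tfrac12)\text{ for } 2\le i\le m\bigr\}
\]
for a small constant $c>0$, with $w_1$ away from $\varphi(+\infty)$. On $\Omega$, each argument $\overline{w_k}+w_{k+1}-1$ has modulus comparable to $\Re w_1-\tfrac12$ and small argument, so the Laurent expansion $\zeta''(s)=2(s-1)^{-3}+O(1)$ near $s=1$ yields $\prod_{k}\zeta''(\overline{w_k}+w_{k+1})\asymp(\Re w_1-\tfrac12)^{-3m}$ (and the real part of the product is likewise $\asymp$ its modulus). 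Simultaneously, the submean-value inequality \eqref{eq:submean} gives
\[
\int_{|w_i-w_1|<c(\Re w_1-\tfrac12)}M_\varphi(w_i)\,dA(w_i)\gg (\Re w_1-\tfrac12)^2 M_\varphi(w_1)
\]
for each $i=2,\ldots,m$. Multiplying these bounds and integrating over $w_1\in\Cud$,
\[
\operatorname{tr}(T_\varphi^m) \gg \int_{\Cud}M_\varphi(w_1)^m(\Re w_1-\tfrac12)^{2(m-1)-3m}\,dA(w_1) = \int_{\Cud}\frac{M_\varphi(w)^m}{(\Re w-\tfrac12)^{m+2}}\,dA(w),
\]
which settles the theorem for integer $m\geq 2$.

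For real $p\geq 2$, the integer case at $m=\lceil p\rceil$ (using $C_\varphi\in S_{2p}\subset S_{2m}$) gives $\int M_\varphi^m/(\Re w-\tfrac12)^{m+2}\,dA<\infty$. To pass from $m$ down to $p$, write $g:=M_\varphi/(\Re w-\tfrac12)$ and $d\lambda:=dA/(\Re w-\tfrac12)^2$; the Brevig--Perfekt boundedness characterization gives $g\ll 1$, so splitting $\int g^p\,d\lambda=\int_{\{g\ge 1\}}+\int_{\{g<1\}}$ takes care of the first piece via $g^p\le g^m$. The main obstacle is the small-$g$ contribution $\int_{\{g<1\}}g^p\,d\lambda$, which cannot be controlled by $\int g^m\,d\lambda$ alone since the ratio $g^p/g^m$ is unbounded as $g\to 0$. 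I expect this is handled by upgrading the diagonal-localization argument to a Luecking-type lattice estimate directly on $(\mathcal D_{-2})_0$: using normalized reproducing kernels at a Bergman lattice in bounded-imaginary strips (where the local Carleson embedding \eqref{eq:localbergmanemben} provides a Bessel sequence), and pasting the local contributions using the decay \eqref{eq:superlittlewood} of $M_\varphi$ to dispose of the factor $(1+|\Im w|)^{-a}$ that appeared in Corollary~\ref{cor:carleson}.
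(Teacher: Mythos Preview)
Your integer case via Theorem~\ref{main5} is fine and morally the same diagonal-localization argument that the paper uses (in the compactness half of the proof of Theorem~\ref{main5}). One point to watch: the trace integrand is not nonnegative off the diagonal tube $\Omega$, so ``restrict to $\Omega$ and drop the rest'' is not automatic; you should instead keep the eigenbasis representation $\operatorname{tr}(T_\varphi^m)=\int_{\Cud}\langle T_\varphi^{m-1}k_{w,-2},k_{w,-2}\rangle M_\varphi(w)\,dA(w)$, which has a nonnegative integrand, and only then localize. This is easily repaired.

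The genuine gap is the passage from the integer $m=\lceil p\rceil$ to the real exponent $p$. Your reduction amounts to: $\int g^m\,d\lambda<\infty$ with $g=M_\varphi/(\Re w-\tfrac12)\ll 1$ and $d\lambda=(\Re w-\tfrac12)^{-2}dA$, hence $\int g^p\,d\lambda<\infty$. But $\lambda$ has infinite total mass on $\Cud$, so this implication is simply false: knowing $g\le 1$ and $\int g^m\,d\lambda<\infty$ says nothing about $\int g^p\,d\lambda$ on $\{g<1\}$ when $p<m$ (think of $g$ small but nonzero on a set of infinite $\lambda$-measure). Your last paragraph acknowledges this and replaces the argument by a hope; the suggested ``Luecking-type lattice estimate'' is exactly the hard part, and you have not carried it out.

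The paper avoids this difficulty entirely by never reducing to integers. It works with real powers of the positive operator $T_\varphi$: from
\[
\|T_\varphi^p\|_{S_1}=\int_{\Cud}\langle T_\varphi^{\,p-1}K_{w,-2},K_{w,-2}\rangle\,\|k_{w,-2}\|^2\,M_\varphi(w)\,dA(w)
\]
(obtained via Tonelli with the eigenbasis), it applies the operator-Jensen inequality \eqref{eq: Holder}, $\langle T_\varphi^{\,p-1}K_{w,-2},K_{w,-2}\rangle\ge\langle T_\varphi K_{w,-2},K_{w,-2}\rangle^{p-1}$, which is valid precisely because $p-1\ge 1$. Then a single application of the submean property \eqref{eq:submean} on $\langle T_\varphi K_{w,-2},K_{w,-2}\rangle=\int|K_{w,-2}|^2 M_\varphi\,dA$ gives the lower bound $(\Re w-\tfrac12)^{-(p-1)}M_\varphi(w)^{p-1}$, and the desired integral drops out. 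The key idea you are missing is this use of the spectral calculus inequality for non-integer exponents, which makes the interpolation step unnecessary.
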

\begin{proof}
 For the positive operator $T_\varphi$ belonging to $S_p,$ denoting by $(f_n)$ an orthonormal basis of eigenvectors of $T_\varphi,$
 \begin{align*}
\infty>\norm{T^p_\varphi}_{S_1}&=\sum\limits_{n\geq 1}\langle T^p_\varphi(f_n),f_n\rangle\\
&=\sum\limits_{n\geq 1}\int\limits_{\mathbb{C}_{\frac{1}{2}}}T^{p-1}_\varphi(f_n)(w)\overline{f_n(w)}M_\varphi(w)\,dA(w)\\
&=\sum\limits_{n\geq 1}\int\limits_{\mathbb{C}_{\frac{1}{2}}}\langle T^{p-1}_\varphi(f_n), k_{w,-2}\rangle\overline{f_n(w)}M_\varphi(w)\,dA(w).
\end{align*}
The quantity under the integral sign is nonnegative since
$$\langle T_\varphi^{p-1}(f_n),k_{w,-2}\rangle\overline{f_n(w)}=s_n^{p-1}\langle f_n,k_{w,-2}\rangle\overline{f_n(w)}=s_n^{p-1}|f_n(w)|^2.$$
An application of Tonelli's theorem yields
\begin{align*}
\infty>\norm{T^p_\varphi}_{S_1}
&=\int\limits_{\mathbb{C}_{\frac{1}{2}}}\sum\limits_{n\geq 1}\langle f_n, T^{p-1}_\varphi(k_{w,-2})\rangle\overline{f_n(w)}M_\varphi(w)\,dA(w)\\
&=\int\limits_{\mathbb{C}_{\frac{1}{2}}}\langle T^{p-1}_\varphi(k_{w,-2}),k_{w,-2} \rangle M_\varphi(w)\,dA(w)\\
&=\int\limits_{\mathbb{C}_{\frac{1}{2}}}\langle T^{p-1}_\varphi(K_{w,-2}),K_{w,-2} \rangle \norm{k_{w,-2}}^2 M_\varphi(w)\,dA(w).
\end{align*}
We now use  \eqref{eq: Holder}
\begin{align*}
\infty>\norm{T^p_\varphi}_{S_1}&\geq \int\limits_{\mathbb{C}_{\frac{1}{2}}}\langle T_\varphi(K_{w,-2}),K_{w,-2} \rangle^{p-1} \norm{k_{w,-2}}^2 
M_\varphi(w)\,dA(w)\\
&\geq \int\limits_{\mathbb{C}_{\frac{1}{2}}}\left(\int\limits_{\mathbb{C}_{\frac{1}{2}}}\left|K_{w,-2}(z)\right|^2M_\varphi(z)\,dA(z)\right)^{p-1} \norm{k_{w,-2}}^2 M_\varphi(w)\,dA(w).
\end{align*}
We conclude as above using the submean value property of the counting function \eqref{eq:submean} to deduce that \eqref{eq:main1} holds true.
\end{proof}

We end up the proof of Theorem \ref{boundedchar} by considering the case $p\in(0,1)$.
\begin{theorem}\label{thm:pinzeroone}
 Let $p\in (0,1)$ and $\varphi\in \GHZ.$ Assume that $\varphi$ has bounded imaginary part and $C_\varphi\in S_{2p}$. Then
 $$\int\limits_{\mathbb{C}_\frac{1}{2}}\frac{\left(M_\varphi(w)\right)^{p}}{\left(\Re w-\frac{1}{2}\right)^{p+2}}\, dA(w)<+\infty.$$
\end{theorem}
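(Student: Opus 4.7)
The method of Theorem~\ref{thm:necessaryp>2} cannot be reused here because the operator inequality \eqref{eq: Holder} reverses for $p\in(0,1)$. The plan is to carry out a discrete Luecking--Zhu-type argument on a lattice of reproducing kernels. The assumption that $\Im\varphi$ is bounded plays a decisive role: it forces $M_\varphi\,dA$ to be supported in a horizontal strip on which $(\mathcal D_{-2})_0$ behaves like a standard weighted Bergman space, through the local embedding \eqref{eq:localbergmanemben}. As in the proofs above, it suffices to show that $T_\varphi\in S_p$ forces the integral condition.

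Fix $M>0$ with $|\Im\varphi|\le M$ and set $\Omega_M:=\{w\in\Cud:|\Im w|\le M\}$, so that $M_\varphi\,dA$ is supported in $\Omega_M$. Choose a maximal $r_0$-separated pseudo-hyperbolic lattice $(w_j)$ in $\Omega_M$, with $r_0$ small enough that the normalized reproducing kernels $(K_{w_j,-2})$ form a Riesz sequence in $(\mathcal D_{-2})_0$; this Riesz property is standard in weighted Bergman spaces on the strip, and transfers through \eqref{eq:localbergmanemben}. If $P$ is the orthogonal projection onto the closed linear span of $(K_{w_j,-2})$, the ideal property of Schatten classes yields $\|T_\varphi\|_{S_p}\ge\|PT_\varphi P\|_{S_p}$. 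Using the form \eqref{rep} of $k_{w,-2}$ we have $|K_{w_j,-2}(z)|^2\gtrsim(\Re w_j-\frac12)^{-3}$ on the hyperbolic disk $D_j$ around $w_j$, and combined with the submean-value property \eqref{eq:submean} of $M_\varphi$ this gives the diagonal bound
$$M_{jj}:=\int|K_{w_j,-2}|^2\,M_\varphi\,dA\ \gtrsim\ \frac{M_\varphi(w_j)}{\Re w_j-\frac12}.$$
The off-diagonal entries of the Gram matrix of $PT_\varphi P$ in this basis decay in pseudo-hyperbolic distance $\rho_h(w_i,w_j)$, thanks to the cubic decay of $k_{w,-2}$ uniformly within $\Omega_M$. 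A quasi-diagonal Schatten-$p$ estimate then gives $\|PT_\varphi P\|_{S_p}^p\gtrsim\sum_j M_{jj}^p$. Finally, a Riemann-sum comparison (using $|D_j|\sim(\Re w_j-\frac12)^2$ and the slow hyperbolic variation of $M_\varphi$) yields
$$\sum_j\left(\frac{M_\varphi(w_j)}{\Re w_j-\frac12}\right)^p\ \sim\ \int_{\Omega_M}\frac{M_\varphi(w)^p}{\PRewud^{p+2}}\,dA(w),$$
which is the desired finiteness.

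The delicate point is the quasi-diagonal $S_p$ lower bound for $p<1$. In contrast to the case $p\ge 1$ used in Theorems~\ref{thm:schattencarleson} and \ref{thm:necessaryp>2}, no sum $\sum\langle Te_n,e_n\rangle^p$ over an orthonormal basis can minorize $\|T\|_{S_p}^p$ when $p<1$; the argument must instead rely on the Riesz-sequence structure of $(K_{w_j,-2})$ together with quantitative off-diagonal decay of the Gram matrix, reducing to a near-diagonal positive matrix on $\ell^2$ whose Schatten $p$-quasinorm is equivalent to the $\ell^p$ quasinorm of its diagonal. This entire setup depends critically on the bounded imaginary part of $\varphi$: outside of a strip of bounded height, both the Riesz-sequence property and the pseudo-hyperbolic decay of the Gram matrix break down.
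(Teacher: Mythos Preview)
Your sketch heads toward a Luecking--Zhu lattice argument, but the paper's proof is far more direct and avoids all of the machinery you invoke. Your premise that ``no sum $\sum\langle Te_n,e_n\rangle^p$ over an orthonormal basis can minorize $\|T\|_{S_p}^p$ when $p<1$'' overlooks the obvious choice: for the \emph{eigenbasis} $(f_n)$ of the positive compact operator $T_\varphi$ one has the exact equality $\|T_\varphi\|_{S_p}^p=\sum_n s_n^p=\sum_n\langle T_\varphi f_n,f_n\rangle^p$. The paper then writes each term as
\[
\langle T_\varphi f_n,f_n\rangle=\int_{\varphi(\CC_0)}\frac{M_\varphi(w)}{\Rewud}\,d\nu_n(w),\qquad d\nu_n(w)=|f_n(w)|^2\PRewud dA(w),
\]
and observes that, thanks to the bounded imaginary part of $\varphi$ and \eqref{eq:localbergmanemben}, the measures $\nu_n$ have mass bounded uniformly in $n$. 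Jensen's inequality for the concave map $x\mapsto x^p$ (applied to each $\nu_n$ after normalizing to a probability measure) gives $\langle T_\varphi f_n,f_n\rangle^p\gg\int\bigl(M_\varphi(w)/(\Rewud)\bigr)^p\,d\nu_n(w)$; summing over $n$ and using $\sum_n|f_n(w)|^2=\zeta''(2\Re w)$ produces the integral bound in a few lines.

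As for your route, there are genuine gaps. The Riesz-sequence claim for $(K_{w_j,-2})$ in $(\mathcal D_{-2})_0$ is not ``standard'': this is not a classical Bergman space, and \eqref{eq:localbergmanemben} is only a one-sided embedding into an $L^2$-space on the strip, so neither the frame nor the Riesz-basis direction transfers automatically. More seriously, the step $\|PT_\varphi P\|_{S_p}^p\gtrsim\sum_j M_{jj}^p$ is exactly the crux you flag but do not prove; for $p<1$ it would require either an atomic decomposition in $(\mathcal D_{-2})_0$ or a quantitative diagonal-dominance argument with constants tied to $p$, neither of which you have supplied. The paper's Jensen-on-the-eigenbasis trick replaces all of this with a one-line inequality.
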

\begin{proof}
 We still denote by $(f_n)$ an orthonormal basis of eigenvectors of $T_\varphi.$ We now write
 \begin{align*}
  \|T_\varphi\|_{S_p}^p&=\sum_{n\geq 1}\left(\langle T_\varphi (f_n),f_n\rangle\right)^p \\
  &=\sum_{n\geq 1}\left(\int_{\Cud}|f_n(w)|^2 M_\varphi(w) dA(w)\right)^p \\
  &=\sum_{n\geq 1}\left(\int_{\varphi(\mathbb C_0)}\frac{M_\varphi(w)}{\Rewud} |f_n(w)|^2 \PRewud dA(w)\right)^p.
 \end{align*}
 Now by \eqref{eq:localbergmanemben} the measures $\mathbf 1_{\varphi(\mathbb C_0)}|f_n(\cdot)|^2 \left(\Re (\cdot)-\frac 12\right) dA$ are finite measures on $\Cud$
 with uniformly bounded mass. It follows from H\"{o}lder inequality that 
 \begin{align*}
  \|T_\varphi\|_{S_p}&\gg \int_{\Cud}\sum_{n\geq 1} \frac{M_\varphi(w)^p}{\PRewud^p} |f_n(w)|^2 \PRewud dA(w)\\
  &\gg \int_{\Cud} \frac{M_\varphi(w)^p}{\PRewud^{p-1}}\zeta''(2\Re(w))dA(w).
 \end{align*}
\end{proof}

\subsection{The case of even integers}

We now prove the $2m$-Schatten class characterization \eqref{eq:main4}.

\begin{proof}[\textbf{Proof of Theorem \ref{main5}}]
We first prove that \eqref{eq:main4} implies that $C_\varphi$ is compact.
If this was not the case, then we could find $\delta>0$ and a sequence $(w(k))\subset\Cud$ with real part going to $1/2$ such that for every $\varepsilon\in(0,1)$ the rectangles
\begin{multline*}
R_k=\\\left(\frac{\Re w(k)-\frac{1}{2}}{2},\frac{3\left(\Re w(k)-\frac{1}{2}\right)}{2}\right)\times \left(\Im w(k)-\varepsilon\left(\Re w(k)-\frac{1}{2}\right),\Im w(k)+\varepsilon\left(\Re w(k)-\frac{1}{2}\right)\right)
\end{multline*} 
are pairwise disjoint
and for all $k\geq 1,$
$$\frac{M_\varphi(w(k))}{\Re(w(k))-\frac 12}\geq\delta.$$

Let $A_k=\prod_{j=1}^m R_k$. We recall that $\zeta''(s)$ has a pole of order $3$ at $s=1$, thus we can choose $\varepsilon>0$ close to zero such that 
$$\Re\left(\zeta''\left(\overline{w_1}+w_2\right)\cdots\zeta''\left(\overline{w_{m-1}}+w_m\right)\zeta''\left(\overline{w_m}+w_1\right)\right)\gg \left(\Re w(k)-\frac{1}{2}\right)^{-3m},$$
for every $w=(w_1,\dots,w_m)\in A_k$. Using the mean-value property of the counting function as well as the estimate above, we obtain that
$$\int_{A_k} \zeta''\left(\overline{w_1}+w_2\right)\cdots\zeta''\left(\overline{w_{m-1}}+w_m\right)\zeta''\left(\overline{w_m}+w_1\right)\prod_{j=1}^{m}M_\varphi(w_j)\,dA(w_j)\gg
\prod_{j=1}^m \frac{M_\varphi(w_k)}{\Re (w(k))-\frac 12}\geq\delta^m.$$
Since the sets $A_k$ are pairwise disjoint, this would contradict \eqref{eq:main4}.

Hence, for both implications of Theorem \ref{main5}, we may assume that $C_\varphi$ hence $T_\varphi$ is compact.
Let us consider the canonical decomposition of $T_\varphi$, $T_\varphi(f)=\sum\limits_{n\geq 1}s_n\langle f,f_n\rangle f_n.$
We know that
$C_\varphi\in S_{2m}$ if and only if $T^m_\varphi\in S_{1}$ if and only if
\begin{equation*}
\sum\limits_{n\geq 1}\langle T_\varphi^m(f_n),f_n\rangle<\infty.
\end{equation*}
We observe that 
\begin{equation*}
\sum\limits_{n\geq 1}\langle T_\varphi^m(f_n),f_n\rangle=\sum\limits_{n\geq 1}\int\limits_{\mathbb{C}_\frac{1}{2}}\langle T_\varphi^{m-1}(f_n),k_{w_1,-2}\rangle\overline{f_n(w_1)}M_\varphi(w_1)\,dA(w_1).
\end{equation*}
Arguing as in the proof of Theorem \ref{thm:necessaryp>2}, we may use Tonelli's theorem to get
\begin{align*}
\sum\limits_{n\geq 1}\langle T_\varphi^m(f_n),f_n\rangle&=\int\limits_{\mathbb{C}_\frac{1}{2}}\langle T_\varphi^{m-1}(k_{w_1,-2}),k_{w_1,-2}\rangle M_\varphi(w_1)\,dA(w_1)\\
&=\int\limits_{\mathbb{C}_\frac{1}{2}}\int\limits_{\mathbb{C}_\frac{1}{2}}\langle T_\varphi^{m-2}(k_{w_1,-2}),k_{w_2,-2}\rangle\zeta''(w_1+\overline{w_2}) M_\varphi(w_2)M_\varphi(w_1)\,dA(w_2)\,dA(w_1).
\end{align*}
By induction one obtains
\begin{equation*}
\|T_\varphi^m\|_{S_1}=\int\limits_{\mathbb{C}_\frac{1}{2}}\cdots\int\limits_{\mathbb{C}_\frac{1}{2}}\zeta''\left(\overline{w_1}+w_2\right)\cdots\zeta''\left(\overline{w_{m-1}}+w_m\right)\zeta''\left(\overline{w_m}+w_1\right)\prod_{k=1}^{m}M_\varphi(w_k)\,dA(w_k).
\end{equation*}
\end{proof}

We now intend to give a similar characterization involving the boundary values of $\varphi\in\GHZ.$ For every $\chi\in\mathbb T^\infty,$ $\varphi_\chi$ belongs to $\GHZ$ 
and for almost every $\chi,$ the generalized boundary value $\varphi(\chi)=\lim_{\sigma\to 0^+}\varphi_\chi(\sigma)$ does exist (see \cite[Section 2]{BP20} or \cite[Corollary 3.3]{BP21}).
Of course, $\Re(\varphi(\chi))\geq 1/2$ for almost every $\chi\in\mathbb T^\infty.$ 
We first show that when $C_\varphi$ is compact, this inequality is strict for almost
every $\chi\in\mathbb T^\infty.$

\begin{theorem}\label{thm:varphichi}
Let $\varphi\in\GHZ$ such that $C_\varphi$ induces a compact operator on $\mathcal H^2.$
Then $\Re(\varphi(\chi))>1/2$ for almost every $\chi\in\mathbb T^\infty.$
\end{theorem}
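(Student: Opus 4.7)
The plan is to argue by contradiction through a normalized reproducing kernel test. Suppose that the set $E:=\{\chi\in\TT^\infty:\Re(\varphi(\chi))=\frac12\}$ has positive Haar measure, and push the restriction $m_\infty|_E$ forward along the map $\chi\mapsto\Im(\varphi(\chi))$ to obtain a nonzero finite Borel measure $\nu$ on $\mathbb R$. By the Lebesgue--Besicovitch differentiation theorem, the symmetric density of $\nu$ is strictly positive (possibly $+\infty$) at $\nu$-almost every point, so one may fix $\tau_0\in\mathbb R$ and $c>0$ such that
$$\nu\bigl([\tau_0-\veps,\tau_0+\veps]\bigr)\geq c\,\veps\qquad\text{for all sufficiently small }\veps>0.$$

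With $\tau_0$ chosen, I would test the compactness of $C_\varphi$ on the sequence of normalized reproducing kernels $\kappa_{w_n}:=K_{w_n}/\|K_{w_n}\|$ at the points $w_n=\frac12+\frac1n+i\tau_0$, where $K_{w_n}(s)=\zeta(s+\overline{w_n})$ and $\|K_{w_n}\|_{\mathcal H^2}^2=\zeta(1+2/n)\sim n/2$. A standard density argument applied to the total family $\{K_v\}_{v\in\Cud}$ shows that $\kappa_{w_n}\to 0$ weakly in $\mathcal H^2$, and compactness of $C_\varphi$ therefore forces $\|C_\varphi\kappa_{w_n}\|_{\mathcal H^2}\to 0$.

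The contradiction is extracted via a lower bound obtained from the boundary value identity $\|g\|_{\mathcal H^2}^2=\int_{\TT^\infty}|g(\chi)|^2\,dm_\infty(\chi)$ applied to $g=C_\varphi\kappa_{w_n}$, combined with $(C_\varphi\kappa_{w_n})(\chi)=\kappa_{w_n}(\varphi(\chi))$. Restricting the integration to $E_n:=\{\chi\in E:|\Im(\varphi(\chi))-\tau_0|\leq 1/n\}$, for $\chi\in E_n$ one has $\varphi(\chi)+\overline{w_n}=1+\frac1n+i(\Im(\varphi(\chi))-\tau_0)$, lying within distance $\sqrt{2}/n$ of the pole of $\zeta$ at $s=1$. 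The Laurent expansion $\zeta(1+z)=\frac 1z+\gamma+O(z)$ yields $|\zeta(\varphi(\chi)+\overline{w_n})|^2\gg n^2$, whence $|\kappa_{w_n}(\varphi(\chi))|^2\gg n$ on $E_n$. Combined with $m_\infty(E_n)=\nu([\tau_0-1/n,\tau_0+1/n])\geq c/n$, this gives $\|C_\varphi\kappa_{w_n}\|_{\mathcal H^2}^2\geq c'>0$ for some positive $c'$, contradicting the vanishing forced by compactness. The main technical subtlety is the density step for $\nu$: the Lebesgue--Besicovitch theorem is needed to handle a possibly purely singular continuous part (where the symmetric density is $+\infty$ at $\nu$-a.e. point of its support), not just the absolutely continuous case; checking that $\kappa_{w_n}\to 0$ weakly is routine once one uses that $|\zeta(v+\overline{w_n})|$ stays bounded for each fixed $v\in\Cud$ while $\|K_{w_n}\|$ diverges.
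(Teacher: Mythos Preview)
Your argument is correct. Both your proof and the paper's proof rest on the same principle—that a nonzero restriction of the pushforward measure $\mu_\varphi$ to the critical line would violate compactness via the reproducing kernel test—but the two organize this differently.

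The paper proceeds in two stages: first it invokes \cite[Theorem~3]{OS12} to conclude that compactness of $C_\varphi$ forces $\mu_\varphi$ to be a \emph{vanishing} Carleson measure for $H^2(\Cud)$; it then argues, following the standard template from \cite[Chapter~3]{CM98}, that any vanishing Carleson measure has zero trace on the boundary line (absolute continuity plus a covering estimate for the Radon--Nikodym density). Your route bypasses the Carleson machinery entirely: you go straight to a density point $\tau_0$ of the pushforward of $m_\infty|_E$ along $\Im\varphi$, and test $C_\varphi$ on the normalized kernels $\kappa_{w_n}$ at $w_n=\tfrac12+\tfrac1n+i\tau_0$. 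The Laurent estimate on $\zeta$ near its pole then yields the lower bound $\|C_\varphi\kappa_{w_n}\|^2\gg 1$ directly. This is more self-contained—no appeal to \cite{OS12} or \cite{CM98}—at the cost of invoking the Besicovitch density theorem to locate $\tau_0$ (which is exactly what the covering argument in the paper's second stage is doing, in different clothing). One small point worth making explicit in your write-up: the identity $(C_\varphi\kappa_{w_n})(\chi)=\kappa_{w_n}(\varphi(\chi))$ for $\chi\in E$ uses that $\zeta(\cdot+\overline{w_n})$ extends holomorphically across $\{\Re s=\tfrac12\}$, so continuity of $\kappa_{w_n}$ on $\overline{\Cud}$ lets you pass the boundary limit $\varphi_\chi(\sigma)\to\varphi(\chi)$ through it.
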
 
\begin{proof}
The norm of the image of a function $f\in\mathcal{H}^2$ under $C_\varphi$ can be written as
\begin{equation*}
\norm{C_\varphi(f)}_{\mathcal{H}^2}^2=\int\limits_{\mathbb{T}^\infty}\left|f\circ\varphi(\chi)\right|^2\,dm_\infty(\chi)=\int\limits_{\overline{\mathbb{C}_\frac{1}{2}}}\left|f(w)\right|^2\,d\mu_\varphi(w),
\end{equation*}
where $\mu_\varphi$ if the push-forward measure of $m_\infty$ by $\varphi(\chi)$, see \cite{BP20}. Since $C_\varphi$ is compact and the reproducing kernel $\zeta(\cdot+\bar w)$ of $\mathcal H^2$ at $w$ 
satisfies
$$\zeta(s+\bar w)=\frac{1}{\bar w+s-1}+O(1),$$
we can argue like in the proof of \cite[Theorem 3]{OS12} to deduce that
\begin{equation}\label{Carleson}
\mu_\varphi\left(Q\right)=o\left(\left|I\right|\right),\qquad |I|\rightarrow 0,
\end{equation}
where $Q$ is a (Carleson) square in $\mathbb{C}_\frac{1}{2}$ with one side $I$ on the vertical line $\left\{\Re s=\frac{1}{2}\right\}$. This means that $\mu_\varphi$ is a vanishing Carleson measure for $H^2(\mathbb{C}_\frac{1}{2})$ and this implies that $\mu_\varphi\vline_{\left\{\Re s=\frac{1}{2}\right\}}$ is absolutely continuous with respect to the Lebesgue measure of $\mathbb{R}$. 
Following a standard argument, see for example \cite[Chapter~3]{CM98}, we will prove that $\mu_\varphi\vline_{\left\{\Re s=\frac{1}{2}\right\}}$ is equal to $0.$
 By the Lebesgue-Radon-Nikodym theorem there exists a positive function $f\in L^1(\mathbb{R})$ such that $$d\mu_\varphi\vline_{\left\{\Re s=\frac{1}{2}\right\}}=f(t)\,dt.$$
The set $E=\{\chi:\,\Re\varphi(\chi)>\frac{1}{2}\}$ is of full measure if and only if $f\equiv0$. Let us assume that there exists $\varepsilon>0$ such that $\left| f^{-1}\left((\varepsilon,\infty)\right)\right|>0.$
Let $F\subset f^{-1}((\varepsilon, +\infty))$ with positive and finite measure and let $\delta>0$ be such that 
$$\mu_{\varphi}(Q)\leq \frac{\veps}4 |I|$$
for every Carleson square in $\Cud$ with length $|I|\leq\delta.$ We can cover $F$ 
by a sequence of intervals $(I_n)$ such that $|I_n|\leq\delta$ and 
$$\sum_n |I_n|\leq 2|F|.$$
Now,
$$\veps |F|\leq \mu_\varphi\vline_{\left\{\Re s=\frac{1}{2}\right\}} (F)\leq \frac{\veps}4\sum_n |I_n|\leq \frac{\veps}2 |F|$$
which is a contradiction with $|F|>0$. Thus $\Re(\varphi(\chi))>1/2$ for a.e. $\chi\in\TT^\infty.$
\end{proof}

We are now ready to give an analogue of Theorem \ref{main5} involving the symbol directly.

\begin{theorem}\label{thm:schattenvarphichi}
Suppose that the symbol $\varphi\in \mathfrak{G}_0$ induces a compact operator and let $m\in\mathbb N.$ Then, $C_\varphi$ belongs to $S_{2m}$ if and only if
	\begin{equation}\label{eq:main5}
	\int\limits_{\mathbb{T}^\infty}\cdots\int\limits_{\mathbb{T}^\infty}\zeta\left(\overline{\varphi(\chi_1)}+\varphi(\chi_2)\right)\cdots\zeta\left(\overline{\varphi(\chi_{m-1})}+\varphi(\chi_m)\right)\zeta\left(\overline{\varphi(\chi_m)}+\varphi(\chi_1)\right)\prod_{k=1}^{m}\,dm_\infty(\chi_k)<\infty.
\end{equation}
\end{theorem}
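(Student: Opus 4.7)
The plan is to identify both sides of the equivalence with $\operatorname{tr}((C_\varphi^*C_\varphi)^m)$. Set $T := C_\varphi^*C_\varphi$; since $C_\varphi$ is assumed compact, $T$ is positive and compact on $\mathcal{H}^2$, and $C_\varphi \in S_{2m}$ if and only if $T^m \in S_1$, with $\operatorname{tr}(T^m) = \|C_\varphi\|_{S_{2m}}^{2m}$ in $[0,+\infty]$. By Theorem~\ref{thm:varphichi}, $\Re\varphi(\chi) > 1/2$ for $m_\infty$-a.e. $\chi$, so the integrand in \eqref{eq:main5} is defined almost everywhere and both sides of the claimed equivalence are meaningful.

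The key computation is parallel to the proof of Theorem~\ref{main5}, but carried out on $\mathcal{H}^2$ rather than $(\mathcal{D}_{-2})_0$. Using the polytorus representation of the $\mathcal{H}^2$-inner product and $\overline{\zeta(s)} = \zeta(\bar s)$, one computes
$$C_\varphi^*h(s) = \int_{\TT^\infty} h(\chi)\,\zeta(s+\overline{\varphi(\chi)})\,dm_\infty(\chi) \qquad (h \in \mathcal{H}^2),$$
and hence $(Tf)(s) = \int_{\TT^\infty} f(\varphi(\chi))\zeta(s+\overline{\varphi(\chi)})\,dm_\infty(\chi)$. Iterating yields an explicit $m$-fold integral formula for $T^m f$. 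Fix an orthonormal eigenbasis $(f_n)$ of $T$ with eigenvalues $\lambda_n \geq 0$ and use
$$\langle T^m f_n, f_n\rangle = \langle C_\varphi T^{m-1}f_n, C_\varphi f_n\rangle = \int_{\TT^\infty}(T^{m-1}f_n)(\varphi(\chi_0))\,\overline{f_n(\varphi(\chi_0))}\,dm_\infty(\chi_0).$$
Substituting the iterated formula for $T^{m-1}f_n$, summing over $n$, and collapsing the only $f_n$-dependent factor via the reproducing-kernel identity $\sum_n f_n(w)\overline{f_n(z)} = \zeta(w+\bar z)$, then relabeling $\chi_0 \mapsto \chi_m$ to restore cyclic indexing, gives
$$\operatorname{tr}(T^m) = \int_{(\TT^\infty)^m}\prod_{k=1}^m\zeta\bigl(\varphi(\chi_k)+\overline{\varphi(\chi_{k+1})}\bigr)\prod_{k=1}^m dm_\infty(\chi_k),$$
with indices modulo $m$. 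Since the integrand is the pointwise complex conjugate of the integrand in \eqref{eq:main5} (again by $\overline{\zeta(s)} = \zeta(\bar s)$) and the trace is real, the two integrals coincide.

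The main technical obstacle is to justify rigorously the interchange of the summation over $n$ and the $m$-fold integration. As in the paper's proof of Theorem~\ref{main5}, one exploits the eigenbasis: the intermediate quantity
$$\sum_n \lambda_n^{m-1}\,|f_n(w)|^2 = \langle T^{m-1}\kappa_w, \kappa_w\rangle \geq 0, \qquad \kappa_w(s) := \zeta(s+\bar w),$$
is nonnegative, so Tonelli applies on the outer integral and produces the identity in $[0,+\infty]$. An alternative route avoiding the eigenbasis entirely is to factor $T = J_\varphi^*J_\varphi$ through the evaluation embedding $J_\varphi:\mathcal{H}^2 \to L^2(\mu_\varphi)$, where $\mu_\varphi = \varphi_* m_\infty$; then $J_\varphi J_\varphi^*$ is a positive compact integral operator on $L^2(\mu_\varphi)$ with the positive-definite reproducing kernel $\zeta(w+\bar z)$, to which a Mercer-type trace formula applies, and pushing forward from $\Cud^m$ to $(\TT^\infty)^m$ via $\mu_\varphi = \varphi_* m_\infty$ recovers the same identity.
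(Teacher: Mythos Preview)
Your proof takes essentially the same approach as the paper: set $T=C_\varphi^*C_\varphi$, write $\langle Tf,g\rangle$ as a polytorus integral, use an eigenbasis of $T$ so that the integrand $s_n^{m-1}|f_n(\varphi(\chi))|^2$ is nonnegative and Tonelli applies, then iterate via the reproducing kernels $\zeta(\cdot+\overline{\varphi(\chi)})$ to identify $\|T^m\|_{S_1}$ with the $m$-fold integral. Your added remarks---the explicit appeal to Theorem~\ref{thm:varphichi} for well-definedness of the integrand, the conjugation argument matching your cyclic product to \eqref{eq:main5}, and the alternative Mercer-type route through $J_\varphi J_\varphi^*$ on $L^2(\mu_\varphi)$---are correct but not spelled out in the paper.
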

\begin{proof}
Let $T=C^*_\varphi\circ C_\varphi$ and let us consider its canonical decomposition 
$$T(f)=\sum_{n\geq 1}s_n \langle f,f_n\rangle f_n.$$
We know that $C_\varphi\in S_{2m}\iff T^m \in S_1$ and that
$$\langle T(f),g\rangle=\int_{\TT^\infty}f(\varphi(\chi))\overline{g(\varphi(\chi))}dm_\infty(\chi).$$
Then
\begin{align*}
\sum_{n\geq 1}\langle T^m(f_n),f_n\rangle&=\sum_{n\geq 1}\langle T(f_n),T^{m-1}(f_n)\rangle\\
&=\sum_{n\geq 1}\int_{\TT^\infty}f_n(\varphi(\chi_1))\overline{\langle T^{m-1}(f_n),
\zeta(\cdot+\overline{\varphi(\chi_1)})\rangle} dm_\infty(\chi_1).
\end{align*}
As in the proof of Theorem \ref{main5} the quantity inside the integral is nonnegative which allows us to use Tonelli's theorem. Hence
\begin{align*}
\sum_{n\geq 1}\langle T^m(f_n),f_n\rangle&=\int_{\TT^\infty}\langle T^{m-1}(k_{\varphi(\chi_1),0}),k_{\varphi(\chi_1),0}\rangle dm_\infty(\chi_1)\\
&=\int_{\TT^\infty}\int_{\TT^\infty}\langle T^{m-2}(k_{\varphi(\chi_1),0}),k_{\varphi(\chi_2),0}\rangle \zeta(\varphi(\chi_1)+\overline{\varphi(\chi_2)})dm_\infty(\chi_1)dm_\infty(\chi_2).
\end{align*}
By induction one finally obtains
$$\|T^m\|_{S_1}=\int\limits_{\mathbb{T}^\infty}\cdots\int\limits_{\mathbb{T}^\infty}\zeta\left(\overline{\varphi(\chi_1)}+\varphi(\chi_2)\right)\cdots\zeta\left(\overline{\varphi(\chi_{m-1})}+\varphi(\chi_m)\right)\zeta\left(\overline{\varphi(\chi_m)}+\varphi(\chi_1)\right)\prod_{k=1}^{m}\,dm_\infty(\chi_k).$$
\end{proof}


\section{A comparison-type principle}\label{Green}
\subsection{Lindel\"{o}f principle and Littlewood inequality}
In this section we will use Lindel\"{o}f principle for Green's function to give a simple proof of a non-contractive Littlewood inequality \eqref{eq:littlewood}. Similar techniques have been used in the disk setting, \cite{B19}.

We recall (see for instance \cite{RAN95}) that a Green's function for a domain $\Omega\subset\CC$ is a function $g_\Omega:\Omega\times\Omega\to (-\infty,+\infty]$
such that, for all $w\in\Omega,$ $g(\cdot,w)$ is harmonic in $\Omega\backslash\{w\},$ $g_\Omega(z,w)\to 0$ n.e as $z\to\partial\Omega$ and $g_\Omega(\cdot,w)+\log|\cdot-w|$
is harmonic in a neighbourhood of $w$. If a domain admits a Green's function then it is necessarily unique. For instance, the Green's function on the disk
$g_\mathbb{D}:\mathbb{D}\times\mathbb{D}\mapsto(0,+\infty]$ has the form 
\begin{equation*}
g_\mathbb{D}(z,w)=\log\left|\frac{1-z\overline{w}}{z-w}\right|.
\end{equation*}
By conformal invariance we can easily define Green's function on every simply connected subdomain of the complex plane, for example
\begin{equation*}
g_{\mathbb{C}_0}(z,w)=\log\left|\frac{z+\overline{w}}{z-w}\right|,\qquad z,\,w\in\mathbb{C}_0.
\end{equation*}
The class of domains $D$ possessing a Green's function $g_{D}$ is much larger than the simply connected domains, see \cite[Chapter~4]{RAN95}. 
Lindel\"{o}f principle for Green's function (see for instance \cite{B14}) states that if $f$ is a holomorphic function mapping $D_1$ to $D_2$, where both of those domains possess Green's function, then for $z_0\in D_1$ and $w\in D_2\setminus\{f(z_0)\}$
\begin{equation}
\sum\limits_{z\in f^{-1}(\{w\})} g_{D_1}(z,z_0)\leq g_{D_2}(w,f(z_0)).
\end{equation}
Let us first show how to deduce, up to a multiplicative constant, Littlewood inequality \eqref{eq:littlewood} and also a corresponding inequality 
for a symbol in $\mathfrak{G}_{\geq1}$ (such an inequality was used in \cite{BAY21} to obtain a sufficient condition for composition operators with symbols in $\mathfrak{G}_{\geq1}$
to be compact on $\mathcal{H}^2$). Recall that for $\psi\in\mathfrak{G}_{\geq1}$, its restricted Nevanlinna counting function is defined by
$$N_\psi(w)=\sum_{\substack{s\in\psi_\chi^{-1}(\{w\})\\|\Im s|\leq 1}}\Re s.$$

\begin{theorem}\label{thm:littlewoodinequality}
 \begin{enumerate}[a)]
  \item Let $\varphi\in\GHZ.$ Then for all $w\in\Cud\backslash\{\varphi(+\infty)\},$
  $$M_\varphi(w)\leq\pi\log\left|\frac{\varphi(+\infty)+\overline w-1}{\varphi(+\infty)-w}\right|.$$
  \item Let $\psi\in\GHU.$ There exists $C>0$ such that, for all $\chi\in\TT^\infty,$ for all $w\in\CC_0$ with $\Re w\leq c_0,$
  $$N_{\psi_\chi}(w)\leq C\frac{\Re w}{1+(\Im w)^2}.$$
 \end{enumerate}
\end{theorem}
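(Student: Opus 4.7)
The plan is to apply Lindel\"{o}f's principle for Green's functions to $\varphi$ (resp.\ $\psi_\chi$), viewed as a holomorphic map into $\Cud$ (resp.\ $\CC_0$), and to extract both inequalities in appropriate limits.

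For part (a), I would fix $\sigma > 0$ and $T > 0$ and work on the half-strip $\Omega_{T,\sigma} = \{\sigma < \Re s,\, |\Im s| < T\}$, which is simply connected and admits the conformal map $\Psi(s) = \sinh\bigl(\pi(s - \sigma)/(2T)\bigr)$ onto $\CC_0$. Its Green's function with pole at a real reference point $\tau > \sigma$ is therefore
\[
g_{\Omega_{T,\sigma}}(s,\tau) \;=\; \log\left|\frac{\Psi(s)+\Psi(\tau)}{\Psi(s)-\Psi(\tau)}\right|,
\]
and Lindel\"{o}f applied to $\varphi|_{\Omega_{T,\sigma}}$ yields
\[
\sum_{s\in \varphi^{-1}(w)\cap\Omega_{T,\sigma}} g_{\Omega_{T,\sigma}}(s,\tau) \;\leq\; g_{\Cud}\bigl(w,\varphi(\tau)\bigr).
\]
I would then let $\tau\to+\infty$ with an appropriate rescaling: the asymptotics $g_{\Omega_{T,\sigma}}(s,\tau)\sim 2\Re\Psi(s)/\Psi(\tau)$, combined with the small-argument expansion $\Re\Psi(s)\approx \pi(\Re s - \sigma)/(2T)$ for $s$ in the bulk of $\Omega_{T,\sigma}$, reproduce the normalization $\pi/T$ defining $M_\varphi$; then $T\to+\infty$ and $\sigma\to 0^+$ give $M_\varphi(w)\leq \pi\, g_{\Cud}(w,\varphi(+\infty))$.

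For part (b), the situation is symmetric: since $c_0\geq 1$, one has $\psi_\chi(\tau)\sim c_0 \tau\to +\infty$ in $\CC_0$ as $\tau\to+\infty$. Applying Lindel\"{o}f directly to $\psi_\chi\colon \CC_0\to\CC_0$ with $s_0=\tau$, rescaling by $\tau/2$, and using $(\tau/2)\,g_{\CC_0}(s,\tau)\to\Re s$ together with $(\tau/2)\,g_{\CC_0}(w,\psi_\chi(\tau))\to \Re w/c_0$, one obtains the global bound $\sum_{s\in\psi_\chi^{-1}(w)}\Re s\leq \Re w/c_0$. To localize to $|\Im s|\leq 1$ and recover the $(1+(\Im w)^2)^{-1}$ factor, I would exploit the character action on vertical translates (so that vertical shifts in $s$ realign into Dirichlet symbols in $\GHU$ with shifted $\chi$) to reduce to the case $\Im w = 0$ and then invoke the bounded-window structure of $\psi_\chi$ to transfer the decay into the prefactor.

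The main obstacle is the source/target asymmetry in part (a): pushing $\tau\to+\infty$ sends the source reference point to the boundary prime end of $\Omega_{T,\sigma}$ so that $g_{\Omega_{T,\sigma}}(\cdot,\tau)\to 0$ pointwise, while on the image side $\varphi(+\infty)$ is an \emph{interior} point of $\Cud$ and $g_{\Cud}(w,\varphi(\tau))$ stays bounded away from $0$. A naive passage to the limit therefore gives only the trivial $0\leq g_{\Cud}(w,\varphi(+\infty))$, and the whole argument hinges on the choice of rescaling factor. This rescaling is precisely what introduces the multiplicative constant $\pi$ in the conclusion, explaining why the inequality is non-sharp compared to \eqref{eq:littlewood}. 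Justifying the interchange of sum, integral, and limits under this rescaling would be handled by Fatou's lemma, with the required domination supplied by \eqref{eq:superlittlewood} and the submean-value property \eqref{eq:submean}.
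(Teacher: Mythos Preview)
Your approach to part (a) has a genuine gap, and it is precisely the one you flag as ``the main obstacle'' but do not resolve. In the half-strip $\Omega_{T,\sigma}$, sending $\tau\to+\infty$ drives the pole to a boundary prime end, so $g_{\Omega_{T,\sigma}}(\cdot,\tau)\to 0$, while $g_{\Cud}(w,\varphi(\tau))\to g_{\Cud}(w,\varphi(+\infty))>0$ since $\varphi(+\infty)$ is interior to $\Cud$. No rescaling repairs this: if $\Lambda(\tau)\to+\infty$, the right-hand side $\Lambda(\tau)\,g_{\Cud}(w,\varphi(\tau))$ blows up and the inequality becomes vacuous; if $\Lambda(\tau)$ stays bounded, the left-hand side still vanishes. (Letting $T\to\infty$ \emph{first}, so that $\Omega_{T,\sigma}\to\CC_\sigma$, would simply return you to the paper's setting.) The paper drops the strip altogether and applies Lindel\"of to $\varphi:\CC_0\to\Cud$ with the pole at the real point $T$,
\[
\sum_{s\in\varphi^{-1}(\{w\})}\log\left|\frac{T+\overline s}{T-s}\right|\;\leq\;\log\left|\frac{\varphi(T)+\overline w-1}{\varphi(T)-w}\right|,
\]
then uses $\log x\geq\tfrac12(1-x^{-2})$ to bound each summand below by $2T\Re s/|T+s|^2$, which for $|\Im s|\leq T$ and $\Re s$ bounded is $\geq(1-\veps)\Re s/T$. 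Here a single parameter $T$ governs both the pole and the window in the definition of $M_\varphi$, so both sides remain of order one as $T\to\infty$ and the limit is nondegenerate.

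For part (b), your limit $\tau\to+\infty$ is legitimate and does yield $\sum_{s\in\psi_\chi^{-1}(\{w\})}\Re s\leq\Re w/c_0$, since here $\psi_\chi(\tau)\sim c_0\tau\to\infty$ and both sides vanish at the same rate. But restricting the sum to $|\Im s|\leq 1$ only gives $N_{\psi_\chi}(w)\leq\Re w/c_0$, with no decay in $\Im w$; your plan to manufacture $(1+(\Im w)^2)^{-1}$ via vertical translates is not substantiated, as a translation in $s$ shifts the window $\{|\Im s|\leq 1\}$ together with $w$ and produces nothing new. The paper instead keeps the pole at the fixed point $s_0=2$: on the right one gets $g_{\CC_0}(w,\psi_\chi(2))=\log\bigl|(\overline w+\psi_\chi(2))/(w-\psi_\chi(2))\bigr|$, which already carries the decay $\ll\Re w/(1+(\Im w)^2)$ uniformly in $\chi$ by a bound from \cite{BAY21}; on the left, $\Re w\leq c_0$ forces $0<\Re s\leq 1$, and on the rectangle $\{0<\Re s\leq 1,\ |\Im s|\leq 1\}$ one has $\log\bigl|(\overline s+2)/(s-2)\bigr|\gg\Re s$, which gives $N_{\psi_\chi}(w)$ directly.
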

\begin{proof}
 Let $\varphi\in\GHZ$ and $w\in\Cud\backslash\{\varphi(+\infty)\}.$ For $T>0$ sufficiently large, $w\neq\varphi(T)$ so
 that Lindel\"{o}f principle implies
 $$\sum\limits_{s\in\varphi^{-1}(\{w\})}\log\left|\frac{T+\overline{s}}{T-s}\right|\leq \log\left|\frac{\varphi(T)+\overline{w}-1}{\varphi(T)-w}\right|.$$
 On the other hand, using the elementary inequality $\log(x)\geq \frac12(1-x^{-2})$ valid for $x>0,$
 $$ \sum\limits_{s\in\varphi^{-1}(\{w\})}\log\left|\frac{T+\overline{s}}{T-s}\right| \geq  \sum\limits_{s\in\varphi^{-1}(\{w\})}\frac{2T\Re s}{|T+s|^2}.$$
 Observe that $\{\Re(s):s\in\varphi^{-1}(\{w\})\}$ is bounded. Therefore, for all $\veps\in(0,1),$
 we can choose $T$ large enough so that
 \begin{align*}
  \sum\limits_{s\in\varphi^{-1}(\{w\})}\log\left|\frac{T+\overline{s}}{T-s}\right|&\geq  \frac{(1-\veps)}{T} \sum\limits_{\substack{s\in\varphi^{-1}(\{w\})\\ |\Im s|\leq T}}  \Re s.
 \end{align*}
We can conclude by letting $T$ to $+\infty$ and $\veps$ to $0$.

Regarding b), let $\chi\in\TT^\infty$ and $w\in\CC_0$ with $\Re w\leq c_0.$ Lindel\"{o}f principle says that 
$$\sum_{s\in\psi_\chi^{-1}(\{w\})} \log\left|\frac{\overline{s}+2}{s-2}\right|\leq \log \left|\frac{\overline w+\psi_\chi(2)}{w-\psi_\chi(2)}\right|.$$
Now, when $\varphi_\chi(s)=w,$ then $0<\Re s\leq 1$ since $\Re w\leq c_0$ so that 
$$N_{\psi_\chi}(w)\leq C \sum_{s\in\psi_\chi^{-1}(\{w\})} \log\left|\frac{\overline s+2}{s-2}\right|.$$
Finally it was shown in \cite{BAY21} that 
$$\log \left|\frac{\overline w+\psi_\chi(2)}{w-\psi_\chi(2)}\right|\leq C\frac{\Re w}{1+(\Im w)^2}$$
where $C$ does not depend neither on $\chi\in\TT^\infty$ nor on $w$ with $\Re w\leq c_0.$
\end{proof}

\subsection{A comparison-type principle and a polygonal compactness theorem}
We shall now apply the idea of the previous subsection 
when $\varphi\in\GHZ$ maps $\CC_0$ into a subdomain $D$ of $\Cud.$ 
Lindel\"{o}f principle helps us to find better estimates on $M_\varphi$. Indeed, provided $D$ admits a Green's function, the proof of Theorem \ref{thm:littlewoodinequality}
shows that 
\begin{equation}\label{eq:littlewoodsubdomain}
 M_\varphi(w)\ll g_D(w,\varphi(+\infty)),\ w\in\Cud\backslash\{\varphi(+\infty)\}.
\end{equation}

We deduce the following comparison principle.  Under similar conditions a norm-comparison principle appeared in \cite{BP20}. 
\begin{theorem}\label{thm:comparisonprinciple}
Let $\varphi\in\GHZ$ be such that $\varphi(\CC_0)\subset D\subset\Cud$ where $D$ is a simply connected domain. 
Let $R_D$ be the Riemann map from $\mathbb D$ onto $D$ such that $R_D(0)=\varphi(+\infty)$
and let $\varphi_D=R_D(2^{-s}).$ Assume that $C_{\varphi_D}$ is compact. Then $C_\varphi$ is compact.
Moreover, if $D\subset \{|\Im(s)|\leq C\}$ for some $C>0$ and if $C_{\varphi_D}$ belongs to $S_{2p}$, $p\geq 1,$
then $C_\varphi$ belongs to $S_{2p}$.
\end{theorem}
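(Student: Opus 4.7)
The plan is to establish the pointwise comparison $M_\varphi(w)\ll M_{\varphi_D}(w)$ on $\Cud\setminus\{\varphi(+\infty)\}$ by first identifying $M_{\varphi_D}$ with a Green's function and then invoking the refined Littlewood inequality \eqref{eq:littlewoodsubdomain}. Once that comparison is in hand, compactness and $S_{2p}$-membership transfer from $\varphi_D$ to $\varphi$ via the characterizations \eqref{eq:compact} and Theorem \ref{boundedchar}.

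First I would verify that $\varphi_D\in\GHZ$ and compute $M_{\varphi_D}$ explicitly. Expanding $R_D(z)=\sum_{n\geq 0}a_n z^n$ on $\mathbb D$ and substituting $z=2^{-s}$, the map $\varphi_D(s)=a_0+\sum_{n\geq 1}a_n(2^n)^{-s}$ is a Dirichlet series with range in $D\subset\Cud$, hence $\varphi_D\in\GHZ$, and $\varphi_D(+\infty)=R_D(0)=\varphi(+\infty)$. For $w\in D\setminus\{\varphi(+\infty)\}$, set $u:=R_D^{-1}(w)\in\mathbb D\setminus\{0\}$. The equation $\varphi_D(s)=w$ becomes $2^{-s}=u$, whose solutions in $\CC_0$ form the arithmetic progression $s_k=-(\log|u|+i\arg u+2\pi i k)/\log 2$, $k\in\mathbb Z$, all of common real part $-\log|u|/\log 2>0$ and imaginary parts spaced by $2\pi/\log 2$. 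Counting those with $|\Im s_k|<T$ gives $(T\log 2)/\pi+O(1)$ terms, so
\begin{equation*}
M_{\varphi_D}(w)=\lim_{T\to\infty}\frac{\pi}{T}\cdot\frac{T\log 2}{\pi}\cdot\frac{-\log|u|}{\log 2}=-\log|R_D^{-1}(w)|=g_D(w,\varphi(+\infty)),
\end{equation*}
the last equality following from conformal invariance, since $g_D(\cdot,R_D(0))=g_{\mathbb D}(R_D^{-1}(\cdot),0)=-\log|R_D^{-1}(\cdot)|$.

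Next, since $\varphi$ maps $\CC_0$ into the simply connected proper subdomain $D$ (which therefore admits a Green's function), \eqref{eq:littlewoodsubdomain} gives
\begin{equation*}
M_\varphi(w)\;\ll\; g_D(w,\varphi(+\infty))\;=\;M_{\varphi_D}(w),\qquad w\in\Cud\setminus\{\varphi(+\infty)\}.
\end{equation*}
If $C_{\varphi_D}$ is compact, then \eqref{eq:compact} forces $M_{\varphi_D}(w)/(\Re w-1/2)\to 0$ as $\Re w\to 1/2^+$; the pointwise bound transfers this to $M_\varphi$, and compactness of $C_\varphi$ follows. For the Schatten statement, the hypothesis $D\subset\{|\Im s|\leq C\}$ ensures that both $\varphi$ and $\varphi_D$ have bounded imaginary part, so Theorem \ref{boundedchar} characterizes $S_{2p}$-membership of each (for $p\geq 1$) by the finiteness of $\int_{\Cud}M(w)^p(\Re w-1/2)^{-p-2}\,dA(w)$; the comparison $M_\varphi\ll M_{\varphi_D}$ then transfers this integrability from $\varphi_D$ to $\varphi$.

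The heart of the argument is the identification $M_{\varphi_D}(w)=g_D(w,\varphi(+\infty))$, which converts the Lindelöf-based upper bound \eqref{eq:littlewoodsubdomain} into a genuine comparison principle rather than a mere size estimate. Everything else is routine: the Taylor-series check that $\varphi_D\in\GHZ$, the harmless preservation of bounded imaginary part under $R_D\circ 2^{-s}$, and the direct invocation of the already-proved characterizations \eqref{eq:compact} and Theorem \ref{boundedchar}.
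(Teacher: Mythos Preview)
Your proof is correct and follows essentially the same route as the paper: you identify $M_{\varphi_D}(w)$ with $g_D(w,\varphi(+\infty))$, combine this with the Lindel\"of-type bound \eqref{eq:littlewoodsubdomain} to obtain $M_\varphi\ll M_{\varphi_D}$, and then feed this pointwise comparison into the characterizations \eqref{eq:compact} and Theorem~\ref{boundedchar}. The only cosmetic difference is that you compute $M_{\varphi_D}$ by explicitly listing the solutions of $2^{-s}=R_D^{-1}(w)$ and averaging, whereas the paper phrases the same computation via $2\pi/\log 2$-periodicity of $\varphi_D$ and equality in Lindel\"of's principle for the Riemann map; both yield $M_{\varphi_D}(w)=-\log|R_D^{-1}(w)|=g_D(w,\varphi(+\infty))$.
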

\begin{proof}
 Let $T=i\frac{2\pi}{\log(2)}.$ By $T$-periodicity of $\varphi_D,$ we know that 
 \begin{align*}
  M_{\varphi_D}(w)&\sim \sum_{\substack{|\Im s|\leq T/2\\s\in\varphi_{D}^{-1}(\{w\})}}\Re s\\
  &\sim \sum_{\substack{ z\in\DD\\ z\in R_D^{-1}(\{w\})}} \log\left(\frac 1{|z|}\right)\\
  &\sim g_D(w,\varphi_D(+\infty))
 \end{align*}
by equality in Lindel\"{o}f principle for a Riemann map. Hence our assumption on $C_{\varphi_D}$ gives
an estimate on $M_{\varphi_D}$ which transfers to $M_\varphi$ thanks to \eqref{eq:littlewoodsubdomain} which
itself gives the corresponding result on $C_\varphi$. Observe that in both cases,
we use the \emph{characterization} of compactness or membership to $S_{2p}$.
\end{proof}
\begin{remark}
 In Theorem \ref{thm:comparisonprinciple}, we can only assume that $D$ admits a Green's function and use
 for $R_D$ a universal covering map of $D$.
\end{remark}

The most interesting case occurs when $\varphi(\CC_0)$ is mapped into an angular sector contained in $\Cud.$ This leads to Theorem \ref{thm:angularsectorzero} that we now prove.
\begin{proof}[Proof of Theorem \ref{thm:angularsectorzero}]
 Green's function of the domain $\Omega=\left\{s\in\Cud:\ \left|\arg(s)-\frac 12\right|<\frac{\pi}{2\alpha}\right\}$ is 
 $$g_\Omega(z,w)=\log\left|\frac{\left(z-\frac{1}{2}\right)^\alpha+\overline{\left(w-\frac{1}{2}\right)}^\alpha}{\left(z-\frac{1}{2}\right)^\alpha-\left(w-\frac{1}{2}\right)^\alpha}\right|.
$$
By \eqref{eq:littlewoodsubdomain} and \cite[Lemma 2.3]{BP21}, for $w\in\varphi(\CC_0)\subset\Omega$, 
\begin{align*}
 M_\varphi(w)&\ll \log\left|\frac{\left(w-\frac{1}{2}\right)^\alpha+\overline{\left(\varphi(+\infty)-\frac{1}{2}\right)}^\alpha}{\left(w-\frac{1}{2}\right)^\alpha-
 \left(\varphi(+\infty)-\frac{1}{2}\right)^\alpha}\right|\\
 &\ll \Re\left(w-\frac 12\right)^\alpha\\
 &\ll \PRewud^ \alpha
\end{align*}
provided $|w-\varphi(+\infty)|>\delta$ for some fixed $\delta>0.$
The proof of compactness follows from the characterization \eqref{eq:compact}, the proof of the Schatten class part follows
from Corollary \ref{cor:carleson}. 
Indeed, letting $\sigma_\infty=2\Re\varphi(+\infty),$ let $T>0$ be such that $|\Im(w)|\leq T$ for all $w\in\varphi(\CC_0)\cap(\Cud\backslash \CC_{\sigma_\infty}).$
Then 
$$\int_{\Cud\backslash \CC_{\sigma_\infty}}\frac{(M_\varphi(w))^p}{\PRewud^{p+2}}(1+|\Im(w)|)^{2(p-1)}dA(w)\ll \int_{\frac 12}^{\sigma_\infty}
\int_{-T}^T \frac{(1+|\Im(w)|)^{2(p-1)}}{\PRewud^{p+2-p\alpha}}dt d\sigma<+\infty$$
since $p>1/(\alpha-1)$. Moreover,
$$\int_{\CC_{\sigma_\infty}}\frac{(M_\varphi(w))^p}{\PRewud^{p+2}}(1+|\Im(w)|)^{2(p-1)}dA(w)\ll \int_{\CC_{\sigma_\infty}} \frac{dA(w)}{(1+|\Im(w)|)^2\PRewud^2}<+\infty$$
by using \eqref{eq:superlittlewood}.
\end{proof}

Using properties of conformal maps, we can extend the compactness part
of Theorem \ref{thm:angularsectorzero} to slightly more general domains.
This is the analogue of the polygonal compactness theorem, \cite{SHAP93}, in our setting.
\begin{theorem}\label{thm:polygonal}
Let $\varphi\in\GHZ$ be such that for some $\delta>0$, the set $\varphi(\CC_0)\bigcap\{\frac{1}{2}<\Re s\leq\frac{1}{2}+\delta\}$ is contained in a finite union of angular sectors $\bigcup_{j=1}^d \left\{\left|\arg\left(z-\frac 12-i\tau_j\right)\right|<\alpha_j\right\}$ with $\tau_j\in\mathbb R$ and $\alpha_j\in(0,\pi/2)$. Then $C_\varphi$ is compact on $\mathcal H^2$.
\end{theorem}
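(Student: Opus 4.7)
The plan is to apply Lindelöf's principle on a simply connected enlargement of $\varphi(\CC_0)$ and then exploit the opening angles $2\alpha_j<\pi$ at each vertex through a harmonic-barrier estimate. Let
$$D=\{s:\Re s>\tfrac12+\delta\}\cup\bigcup_{j=1}^d S_j,\qquad S_j=\{s:|\arg(s-v_j)|<\alpha_j\},\ v_j=\tfrac12+i\tau_j.$$
By hypothesis $\varphi(\CC_0)\subset D$, and $D$ is simply connected: for any $w_0=r_0+it_0$ in a ``gap'' component of $\CC\setminus D$ inside the strip $\{\tfrac12<\Re s\leq\tfrac12+\delta\}$, decreasing $\Re s$ from $r_0$ down to $\tfrac12$ along the horizontal segment only increases $|\arg(\cdot-v_j)|$, so the segment stays outside every $S_j$ and connects $w_0$ to the left half-plane within $\CC\setminus D$. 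Thus $\CC\setminus D$ is connected, $D$ admits a Green's function $g_D$, and the Lindelöf argument used in Theorem \ref{thm:littlewoodinequality} gives
$$M_\varphi(w)\ll g_D(w,\varphi(+\infty)),\qquad w\in\varphi(\CC_0)\setminus\{\varphi(+\infty)\}.$$

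The crux of the proof is the local estimate
$$g_D(w,\varphi(+\infty))\ll|w-v_j|^{\pi/(2\alpha_j)}\quad\text{for $w\in D$ in a small neighbourhood $U_j$ of $v_j$.}\qquad(\star)$$
Since $\partial D\cap U_j$ consists exactly of the two open rays emanating from $v_j$ at angles $\pm\alpha_j$, the function $h_j(w):=\Re(w-v_j)^{\pi/(2\alpha_j)}$ (principal branch) is harmonic and nonnegative on $D\cap U_j$, vanishes on $\partial D\cap U_j$ and at $v_j$, and satisfies $h_j\asymp|w-v_j|^{\pi/(2\alpha_j)}$. Because $\CC\setminus D$ contains an exterior cone at $v_j$, Wiener's criterion makes $v_j$ a regular boundary point, so $g_D(\cdot,\varphi(+\infty))$ extends continuously to $0$ there. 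Choosing $C_j$ large enough that $g_D(\cdot,\varphi(+\infty))\leq C_j h_j$ on the compact set $\partial U_j\cap D$ (possible since $g_D$ is bounded there while $\inf h_j>0$), the maximum principle applied to the harmonic function $g_D(\cdot,\varphi(+\infty))-C_j h_j$ on $D\cap U_j$ yields $(\star)$.

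Finally, every $w\in S_j$ satisfies $|w-v_j|\leq(\Re w-\tfrac12)/\cos\alpha_j$, so by the hypothesis, every $w\in\varphi(\CC_0)$ with $\Re w$ close enough to $\tfrac12$ lies in some $S_j\cap U_j$, and combining with $(\star)$ we get
$$\frac{M_\varphi(w)}{\Re w-\tfrac12}\ll(\Re w-\tfrac12)^{\pi/(2\alpha_j)-1}\xrightarrow[\Re w\to 1/2^+]{}0,$$
since $\pi/(2\alpha_j)>1$. This verifies the compactness criterion \eqref{eq:compact}. The main obstacle is the barrier estimate $(\star)$, specifically the regularity of $g_D$ at the corners $v_j$; an alternative route would be to invoke the Schwarz--Christoffel / Warschawski boundary asymptotics for the Riemann map of $D$ at each pre-vertex and then apply the comparison principle Theorem \ref{thm:comparisonprinciple} directly.
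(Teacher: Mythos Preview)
Your argument is essentially correct and follows a genuinely different route from the paper's. Both proofs work with the same simply connected domain $D$ and invoke \eqref{eq:littlewoodsubdomain} to bound $M_\varphi$ by $g_D(\cdot,\varphi(+\infty))$; the divergence is in how the Green's function is estimated near each corner $v_j$. The paper passes through the Riemann map $f:\mathbb D\to D$, uses Koebe's quarter theorem to write $g_D(w,f(0))\ll \dist(w,\partial D)\,|f'(f^{-1}(w))|^{-1}$, and then appeals to the Kellogg--Warschawski theorem to show that $|f'(f^{-1}(w_n))|\to\infty$ as $w_n\to v_j$. Your barrier method is more self-contained and avoids conformal-map regularity theory; it even gives the explicit local rate $g_D\ll|w-v_j|^{\pi/(2\alpha_j)}$, which the paper's argument does not isolate. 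It is worth noting that your ``alternative route'' via Warschawski asymptotics is precisely what the paper does.

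There is one technical slip in the justification of $(\star)$: the set $\partial U_j\cap D$ is an \emph{open} arc, not compact, and $h_j$ vanishes at its two endpoints (which lie on $\partial S_j$), so $\inf_{\partial U_j\cap D}h_j=0$ and your choice of $C_j$ is not immediate. The simplest repair is to widen the comparison sector: pick $\widetilde\alpha_j\in(\alpha_j,\pi/2)$ and use $\widetilde h_j(w)=\Re(w-v_j)^{\pi/(2\widetilde\alpha_j)}$. This function is strictly positive on the \emph{closed} arc $\partial U_j\cap\overline{S_j}$, so a suitable $C_j$ exists; the maximum principle on $D\cap U_j$ then yields $g_D\leq C_j\widetilde h_j\leq C_j|w-v_j|^{\pi/(2\widetilde\alpha_j)}$, and since $\pi/(2\widetilde\alpha_j)>1$ the concluding limit still vanishes. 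Alternatively, a boundary Harnack comparison along the smooth rays of $\partial S_j$ shows that $g_D/h_j$ remains bounded near the arc's endpoints, which also rescues the original choice of barrier.
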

\begin{proof}
Let us consider the Riemann map $f=R_D$, where $D=\mathbb{C}_{\frac{1}{2}+\delta}\cup \bigcup_{j=1}^d \left\{\left|\arg\left(z-\frac 12-i\tau_j\right)\right|<\alpha_j\right\}$ and let $\{w_n\}_{n\geq 1}$ 
be an arbitrary sequence such that $\Re w_n\rightarrow\frac{1}{2}^+$.
Since $M_\varphi(w_n)=0$ if $w_n\notin D,$ we can assume that the sequence converges to a corner boundary point $f(e^{i\theta_0})\neq\infty$. 
 Then, by \eqref{eq:littlewoodsubdomain} and Koebe's quarter theorem \cite[Corollary~1.4]{POM92}
\begin{multline*}
M_\varphi(w_n)\ll g_D(w_n,f(0))\ll 1-\left|f^{-1}(w_n)\right|^2\ll\\ \dist(w_n,\partial D)\left|f'\left(f^{-1}(w_n)\right)\right|^{-1}\leq \left(\Re w_n-\frac{1}{2}\right)\left|f'\left(f^{-1}(w_n)\right)\right|^{-1}.
\end{multline*}
It is sufficient to prove that $\left|f'\left(f^{-1}(w_n)\right)\right|\rightarrow\infty$, as $n\rightarrow\infty$. By the Kellogg-Warschawski theorem \cite[Theorem~3.9]{POM92} and the Carathéodory extension theorem \cite[Chapter~2]{POM92}
\begin{equation*}
\left|f'\left(f^{-1}(w_n)\right)\right|\gg \left|f^{-1}(w_n)-e^{i\theta_0}\right|^{\alpha-1}\rightarrow \infty,
\end{equation*}
where $\alpha=\max\{a_j:1\leq j\leq d\}.$
\end{proof}

\begin{remark}
Our techniques apply also for symbols $\psi=c_0s+\varphi\in\mathfrak{G}_{\geq 1}$. Although, the range of such a symbol cannot meet the imaginary axis in an angular sector or
more generally inside a domain $D$ where $\Im w$ is bounded for $w\in D\bigcap(\mathbb{C}_0 \setminus\mathbb{C}_\varepsilon),\,\varepsilon>0$. 
If that was the case then we would be able to find a point $s(\varepsilon)\in\mathbb{C}_0\setminus\mathbb{C}_{\frac{\varepsilon}{4c_0}}$ such that $\Re\varphi\left(s(\varepsilon)\right)\leq \varepsilon/4$.
The Dirichlet series $\varphi$ converges uniformly in $\mathbb{C}_{\Re s(\varepsilon)/2}$, \cite{BK23}. By almost periodicity we can find an increasing unbounded sequence of positive numbers $\{T_n\}_{n\geq 1}$ such that
$$\Re \varphi(s(\varepsilon)+iT_n))\leq \frac{\varepsilon}{2}$$
so that $ \Re \psi(s(\varepsilon)+iT_n)\leq \frac{3\varepsilon}{4}$. 
We observe that $\left|\Im \psi(s(\varepsilon)+iT_n)\right|\rightarrow +\infty$, this contradicts our assumption.  
\end{remark}

\subsection{On the boundedness on $\mathcal H^p$}
We conclude this section by the proof of Theorem \ref{thm:p<2}.
We will use Hilbertian methods to prove that our assumption implies that $C_\varphi$ is bounded as an operator from $H$ to $\mathcal H^2,$ 
where $H$ is a Hilbert space of Dirichlet series containing $\mathcal H^p.$
To do this, we need another class of Bergman spaces of Dirichlet series,
the spaces $\mathcal{A}_a,\,\alpha\geq 1.$ They are defined as 
\begin{equation*}
\mathcal{A}_\alpha=\left\{f(s)=\sum\limits_{n\geq1}a_n n^{-s}:\norm{f}^2_{\mathcal{A}_\alpha}=\sum\limits_{n\geq1}\frac{|a_n|^2}{d_\alpha(n)}<\infty\right\},
\end{equation*}
where by $d_\alpha(n)$ we denote the coefficients of the Dirichlet series $\left(\zeta(s)\right)^\alpha, s\in\mathbb{C}_1$.
In particular, $d_2(\cdot)$ is the divisor counting function. $\mathcal{A}_\alpha$ is a reproducing kernel Hilbert space,
the reproducing kernel at a point $s_0\in\mathbb{C}_{\frac{1}{2}}$ being the function $\left(\zeta(\overline{s_0}+\cdot)\right)^\alpha$.
The analogue of the embedding theorem for $\mathcal A_\alpha$ reads:
\begin{lemma}[\cite{OL11}]
For every $f\in\mathcal{A}_\alpha$ and every interval $I\subset\mathbb{R}$ there exists a constant $C=C(|I|)$ such that
\begin{equation}\label{eq:ol}
\int\limits_{\frac{1}{2}}^{1}\int\limits_{I}\left|f'(\sigma+it)\right|^2\left(\sigma-\frac{1}{2}\right)^\alpha\,dt\,d\sigma\leq C \norm{f}^2_{\mathcal{A}_\alpha}.
\end{equation}
\end{lemma}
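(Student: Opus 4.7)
The plan is to derive this Bergman-type local embedding from the Hedenmalm--Lindqvist--Seip local Hardy embedding \eqref{eq:localembending} by slicing in the $\sigma$-direction and then reassembling with the weight $(\sigma-\tfrac12)^\alpha$.

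For the slicing step, at each fixed $\sigma\in(1/2,1)$ I would apply \eqref{eq:localembending} to the vertically shifted Dirichlet series $g_\sigma(s):=f'(s+\sigma-\tfrac12)$, whose $\mathcal H^2$ norm is $\sum_n |a_n|^2(\log n)^2 n^{-(2\sigma-1)}$. This gives
$$\int_I |f'(\sigma+it)|^2\,dt\ \leq\ C|I|\sum_n |a_n|^2(\log n)^2\, n^{-(2\sigma-1)}.$$
Multiplying by $(\sigma-\tfrac12)^\alpha$, integrating in $\sigma$ over $(1/2,1)$, interchanging sum and integral, and using a Laplace-type estimate $\int_0^{1/2}u^\alpha n^{-2u}du\ll (\log n)^{-\alpha-1}$ reduces the claim to a coefficient inequality essentially comparing $(\log n)^{1-\alpha}$ with $1/d_\alpha(n)$.

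The main obstacle is that this naive pointwise comparison fails: for highly composite integers $n$, $d_\alpha(n)$ can far exceed $(\log n)^{\alpha-1}$, so the crude slice-by-slice $\mathcal H^2$ bound over-counts those terms. To bypass this I would appeal to the Bohr lift to identify $\mathcal A_\alpha$ isometrically with a tensor-product weighted Bergman space on $\mathbb D^\infty$ whose one-variable factors are the classical $A^2_\alpha(\mathbb D)$ with reproducing kernels $(1-z\bar w)^{-\alpha}$; here the multiplicative weight $1/d_\alpha(n)=\prod_j \Gamma(k_j+1)\Gamma(\alpha)/\Gamma(k_j+\alpha)$ for $n=\prod p_j^{k_j}$ is built in by construction. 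Running the slicing argument component-wise along the homogeneous decomposition and invoking the weighted Bergman Carleson embedding on each $\mathbb D$-factor, together with \eqref{eq:localembending} applied in the ``diagonal'' direction $\chi\mapsto n^{-it}$ on the polytorus, yields the stated embedding with the correct coefficient-weight matching; the constant $C=C(|I|)$ is obtained by tracking the Hardy local constant together with the length of $I$.
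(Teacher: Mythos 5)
First, a point of reference: the paper does not prove this lemma at all --- it is quoted from Olsen \cite{OL11} --- so the only thing to assess is whether your argument stands on its own. Your opening computation is correct and, importantly, so is your diagnosis of why it fails: slicing at fixed $\sigma$, applying \eqref{eq:localembending} to $f'(\cdot+\sigma-\frac12)$, and integrating $(\sigma-\frac12)^{\alpha}n^{-(2\sigma-1)}$ in $\sigma$ produces the bound $\sum_n|a_n|^2(\log n)^{1-\alpha}$, and the pointwise inequality $d_\alpha(n)\ll(\log n)^{\alpha-1}$ needed to dominate this by $\sum_n|a_n|^2/d_\alpha(n)$ is false (already for $\alpha=2$, the divisor function exceeds every fixed power of $\log n$ along highly composite integers). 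The crucial observation is that this loss is incurred at the slicing step itself: once you have estimated each horizontal slice by the full $\mathcal H^2$-norm of the shifted series, no repackaging of the resulting coefficient sum can recover the factor $1/d_\alpha(n)$.

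The proposed repair does not close this gap. The Bohr-lift identification of $\mathcal A_\alpha$ with a tensor product of one-variable spaces with kernels $(1-z\bar w)^{-\alpha}$ is correct as a statement about norms of coefficients, but the left-hand side of \eqref{eq:ol} does not factor over the disk coordinates: under the Bohr lift, the vertical translation $t$ moves \emph{all} coordinates simultaneously along the Kronecker flow $\chi_j=p_j^{-it}$, and the variable $\sigma$ couples all coordinates through $n^{-\sigma}=\prod_j p_j^{-k_j\sigma}$. Consequently there is no meaningful sense in which one can ``run the slicing argument component-wise'' or ``invoke the weighted Bergman Carleson embedding on each $\mathbb D$-factor''; the final sentence of your proposal asserts that these ingredients ``yield the stated embedding with the correct coefficient-weight matching'' without exhibiting any mechanism by which the weight $1/d_\alpha(n)$ actually emerges. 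This is the missing idea, not a detail. The route taken in \cite{OL11} is genuinely different: one compares the reproducing kernel $\zeta(s+\overline{w})^{\alpha}$ of $\mathcal A_\alpha$ with the local half-plane kernel $\Gamma(\alpha)(s+\overline{w}-1)^{-\alpha}$ (the same type of expansion recorded in \eqref{rep}), shows that the difference only contributes functions that are regular past the critical line, and then transfers the known embedding theory of the corresponding weighted space on $\mathbb{C}_{1/2}$. As written, your argument is incomplete at its central step.
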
  
\begin{proof}[\textbf{Proof of Theorem \ref{thm:p<2}}]
Let us set $\alpha=2k/p.$ Let $\veps>0$, $C>0$ be such that $\Re w\in(0,\veps)$ implies 
$$M_\varphi(w)\leq C\left(\Re w-\frac 12\right)^{\alpha}.$$
Let $T>0$ be such that $\varphi(\CC_0)\cap (\Cud\backslash \CC_{\frac 12+\veps})\subset \left[\frac 12,\frac12+\veps\right]\times[-T,T]$. By \eqref{eq: Stanton},
for $f\in\mathcal H^p\subset\mathcal H^{2k},$
\begin{align*}
\norm{C_\varphi(f)}_{\mathcal{H}^{2k}}^{2k}&=\norm{C_\varphi(f^k)}_{\mathcal H^2}^2\\
&=\left|f^k(\varphi(+\infty))\right|^2+\frac{2}{\pi}\int\limits_{\mathbb{C}_{\frac{1}{2}}}\left|(f^{k})'(w)\right|^2M_{\varphi}(w) \, dA(w)\\
&\ll \int\limits_{\frac{1}{2}}^{\frac{1}{2}+\varepsilon}\int\limits_{-T}^{T}\left|(f^k)'(\sigma+it)\right|^2\left(\sigma-\frac{1}{2}\right)^{\alpha}\,dt\,d\sigma\\
&\quad\quad\quad +
\left|f^k(\varphi(+\infty))\right|^2+\frac{2}{\pi}\int\limits_{\mathbb{C}_{\frac{1}{2}+\varepsilon}}\left|(f^k)'(w)\right|^2M_{\varphi}(w) \, dA(w).
\end{align*}
Let us write $f^k=\sum_{j\geq 1}a_j j^{-s}$. By the Cauchy-Schwarz inequality, for all $w\in\CC_{\frac 12+\veps},$
\begin{align*}
\left|(f^k)'(w)\right|&\leq \sum\limits_{j\geq2}\frac{|a_j|\log j}{j^{\Re w}}\\
&\leq \left(\sum_{j\geq 2}\frac{|a_j|^2}{d_\alpha(j)}\right)^{1/2}\left(\sum_{j\geq 2}\frac{ d_\alpha(j)\log^2 j}{j^{2\Re w}}\right)^{1/2}\\
&\leq C(\veps) |2^{-w}| \|f^k\|_{\mathcal A_\alpha}.
\end{align*}
By the local embedding theorem \eqref{eq:ol}, the boundedness of pointwise evaluation at $\varphi(+\infty)$ and the
continuity of $C_\varphi$ on $\mathcal H^2,$ applied to $2^{-s},$ we get 
$$\|C_\varphi(f)\|_{\mathcal H^{2k}}^{2k}\ll \|f^k\|_{\mathcal A_\alpha}^2.$$
Now, the inclusion operator $i:\mathcal{H}^{p/k}\rightarrow\mathcal{A}_\alpha$ is contractive, \cite{KUL22}. Therefore
$$\|C_\varphi(f)\|_{\mathcal H^p}\leq \|C_\varphi(f)\|_{\mathcal H^{2k}}\ll \|f^k\|_{\mathcal A_\alpha}^{1/k}\leq \|f^k\|^{1/k}_{\mathcal H^{p/k}}=\|f\|_{\mathcal H^p}.$$

Let us turn to compactness. Let $\{f_n\}_{n\geq 1}$ be a sequence of $\mathcal H^q$ converging weakly to $0$.
We set $g_n=f_n^k$ and observe that $(g_n)$ converges pointwise to $0$ on $\Cud$ and that the Dirichlet coefficients
$\widehat g_n(j)$ converge to $0$ for each $j\geq 1.$

We work as above but we now set $\alpha=2k/q$ and consider $\delta\in(0,\veps).$ 
Then 
\begin{align*}
 \norm{C_\varphi(f_n)}_{\mathcal H^q}^{2k}&\leq \norm{C_\varphi(f_n)}_{\mathcal H^{2k}}^{2k}=\norm{C_\varphi(g_n)}^2_{\mathcal H^2}\\
 &\leq |g_n(\varphi(+\infty)|^2+\delta^{\frac 1p-\frac 1q}\int\limits_{\frac{1}{2}}^{\frac{1}{2}+\delta}\int\limits_{-T}^{T}\left|g_n'(\sigma+it)\right|^2\left(\sigma-\frac{1}{2}\right)^{\alpha}\,dt\,d\sigma\\
 &\quad\quad\quad+\frac 2\pi \int_{\mathcal C_{\frac 12+\delta}} \left|g_n'(w)\right|^2M_{\varphi}(w) \, dA(w).
\end{align*}
The first term goes to zero as $n$ tends to $+\infty$ and the second term is as small as we want for every $n$ if we adjust $\delta$ small enough.
Therefore it remains to show that, for a fixed $\delta>0,$ the last terms tends to $0$ as $n$ tends to $+\infty.$
Now, for all $n\geq 1$ and all $w\in \mathbb C_{\frac 12+\delta},$
\begin{align*}
 |g_n'(w)|&\leq \sum_{j\geq 2}\frac{|\widehat{g_n}(j)|\log j}{j^{\Re w}}\\
 &\leq \left(\sum_{j\geq 2}\frac{ d_\alpha(j)\log^2 j}{j^{2\Re w-\frac{\delta}2}}\right)^{1/2}
\left(\sum_{j\geq 2}\frac{|\widehat{g_n}(j)|^2}{d_\alpha(j) j^{\frac\delta2}}\right)^{1/2}\\
&\ll |2^{-w}|\left(\sum_{j=2}^N \frac{|\widehat{g_n}(j)|^2}{d_\alpha(j) j^{\frac\delta2}}+\frac{1}{N^{\frac\delta2}}\|g_n\|_{\mathcal A_\alpha}^2\right)^{1/2}.   
 \end{align*}
Since $(g_n)$ is bounded in $\mathcal A_\alpha$, for any $\eta>0,$ there exists $n_0\in\mathbb N$
such that, $|g_n'(w)|\leq \eta |2^{-w}|.$ We now argue as above to conclude that $(C_\varphi(f_n))$ tends to $0$
in $\mathcal H^q.$
\end{proof}

\begin{remark}
	We choose to work with symbols with range into angular sectors for the sake of simplicity. It will be interesting to know if our techniques can be applied to give other examples of geometric conditions related to the behavior of composition operators on Hardy spaces of Dirichlet series.
\end{remark}


\section{Further discussion}
\subsection{Bergman spaces} We focused on the Hardy space $\mathcal{H}^2$, but we can extend our results 
to Bergman spaces of Dirichlet series $\mathcal{D}_{-a},\,a\geq 0$. The class $\mathfrak{G}$ determines again the bounded composition operators on $\mathcal{D}_{-a},\,a\geq 0$. For Dirichlet series symbols $\varphi\in\mathfrak{G}_0$, the compact composition operators $C_\varphi$ have been characterized, \cite{KP22}, in terms of the weighted counting function
\begin{equation*}
M_{\varphi,1+a}(w)=\lim\limits_{\sigma\rightarrow 0^+}\lim_{T\rightarrow \infty}\frac{\pi}{T}\sum\limits_{\substack{s\in\varphi^{-1}(\{w\})\\
		|\Im s|<T\\
		\sigma<\Re s<\infty}}\left(\Re s\right)^{1+a},\qquad w\neq\varphi(+\infty),
\end{equation*}
and similarly with the Hardy space case, $C_\varphi$ is compact on $\mathcal{D}_{-a},\,a\geq 0$ if and only if
\begin{equation*}
\lim\limits_{\Re w\rightarrow\frac{1}{2}^+}\frac{M_{\varphi,1+a}(w)}{\left(\Re w-\frac{1}{2}\right)^{1+a}}=0.
\end{equation*}
\begin{theorem}\label{bergnecessity}
	Let $\varphi\in\mathfrak{G}_0$ and $p\geq4$. A necessary condition for the composition operator $C_\varphi$ to belong to the class $S_p$ is the following:
\begin{equation}\label{eq:main6}
\int\limits_{\mathbb{C}_\frac{1}{2}}\frac{\left(M_{\varphi,1+a}(w)\right)^{\frac{p}{2}}}{\left(\Re w-\frac{1}{2}\right)^{(a+1)\frac{p}{2}+2}}\,dA(w)<+\infty.
\end{equation}
If we further assume that $\varphi$ has bounded imaginary part, then $C_\varphi$ belongs to
the class $S_p,\,p\geq 2$ if and only if $\varphi$ satisfies \eqref{eq:main6}, and for $p>0$ the condition remains necessary.
\end{theorem}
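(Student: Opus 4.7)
The plan is to transport the Hardy-space arguments of Theorem~\ref{boundedchar}, Theorem~\ref{thm:necessaryp>2} and Theorem~\ref{thm:pinzeroone} to the Bergman setting, after reducing the Schatten-class condition for $C_\varphi$ on $\mathcal{D}_{-a}$ to one for an associated Toeplitz-type operator on the weighted space $(\mathcal{D}_{-a-2})_0$. First I would record (or derive from the computations of \cite{KP22}) the Stanton-type formula for $\mathcal{D}_{-a}$,
\[ \|C_\varphi f\|^2_{\mathcal{D}_{-a}} = |f(\varphi(+\infty))|^2 + c_a \int_{\Cud} |f'(w)|^2 M_{\varphi,1+a}(w)\,dA(w),\]
together with its polarization identity. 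Since the differentiation operator $D$ is an isometry from $(\mathcal{D}_{-a})_0$ onto $(\mathcal{D}_{-a-2})_0$, this identity shows that $C_\varphi \in S_p(\mathcal{D}_{-a})$ if and only if the positive Toeplitz-type operator $T_\varphi^{(a)}$ on $(\mathcal{D}_{-a-2})_0$ defined by
\[ \langle T_\varphi^{(a)} f, g \rangle = \int_{\Cud} f(w)\overline{g(w)}M_{\varphi,1+a}(w)\,dA(w) \]
belongs to $S_{p/2}((\mathcal{D}_{-a-2})_0)$. Two auxiliary ingredients are needed: the kernel estimate $\|k_{w,-(a+2)}\|^2 \sim \PRewud^{-(a+3)}$ and the uniform comparability of $|k_{w,-(a+2)}(z)|$ to $\|k_{w,-(a+2)}\|^2$ on $D(w,\tfrac 12 \PRewud)$, both consequences of \eqref{rep}; and a weighted Littlewood-type inequality $M_{\varphi,1+a}(w) \ll \PRewud^{a+1}/(1+|\Im w|)^2$ for $\Re w$ bounded away from $1/2$, obtained by running the Lindel\"{o}f-principle argument of Theorem~\ref{thm:littlewoodinequality}(a) with $(\Re s)^{a+1}$ in place of $\Re s$.

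With these tools in hand, I would prove the necessity of \eqref{eq:main6} for $p\geq 4$ (so that $p/2\geq 2$) by transcribing the proof of Theorem~\ref{thm:necessaryp>2} with $T_\varphi^{(a)}$, $k_{w,-(a+2)}$ and $M_{\varphi,1+a}$ replacing $T_\varphi$, $k_{w,-2}$ and $M_\varphi$. The submean value property for $M_{\varphi,1+a}$ established in \cite{KP22}, combined with the kernel estimate above, would yield
\[ \langle T_\varphi^{(a)} K_{w,-(a+2)}, K_{w,-(a+2)}\rangle \gg \frac{M_{\varphi,1+a}(w)}{\PRewud^{a+1}}\]
outside a fixed neighbourhood of $\varphi(+\infty)$. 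Substituting this into the Tonelli-based expansion of $\|\bigl(T_\varphi^{(a)}\bigr)^{p/2}\|_{S_1}$ produces exactly the integrand in \eqref{eq:main6}, while the tail on $\{\Re w \geq \sigma_\infty\}$ is absorbed by the weighted Littlewood inequality recalled above.

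For the bounded-imaginary-part case I would exploit the fact that \eqref{eq:localbergmanemben}, applied with parameter $a+2$, implies that $\mathbf{1}_{\varphi(\CC_0)}\PRewud^{a+1}\,dA$ is a Carleson measure for $(\mathcal{D}_{-a-2})_0$. The Carleson-measure machinery of Theorem~\ref{thm:schattencarleson} and Corollary~\ref{cor:schatten} then adapts verbatim with $\PRewud^{a+1}$ playing the role of $\Rewud$: applying part~(a) with $d\mu = \PRewud^{a+1}\,dA$ gives necessity of \eqref{eq:main6} whenever $p>2$, while the choice $\rho(w) = \PRewud^{a+1}$ in part~(b) gives sufficiency in the same range. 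The endpoint $p=2$ is the Hilbert--Schmidt case, recovered by the $m=1$ analogue of Theorem~\ref{main5} computed directly in $(\mathcal{D}_{-a-2})_0$. For $p \in (0,2)$ I would replay Theorem~\ref{thm:pinzeroone}: using \eqref{eq:localbergmanemben} the measures $\mathbf{1}_{\varphi(\CC_0)}|f_n|^2 \PRewud^{a+1}\,dA$ form a uniformly bounded family on $\Cud$, after which H\"{o}lder's inequality with exponents $2/p$ and $2/(2-p)$ yields the necessary condition.

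The main obstacle is purely technical: verifying cleanly that the submean value property and the Lindel\"{o}f-type Littlewood inequality for $M_{\varphi,1+a}$ carry through with the weight $(\Re s)^{a+1}$, and that the reproducing kernel asymptotics from \eqref{rep} are uniform on the Euclidean disc $D(w,\tfrac 12 \PRewud)$. Once those are in place, the bookkeeping of the exponent $(a+1)p/2 + 2$ in \eqref{eq:main6} is automatic: $(a+1)(p/2-1)$ comes from the H\"{o}lder or Carleson step, $(a+3)$ from $\|k_{w,-(a+2)}\|^2$, and $-(a+1)$ from absorbing the Carleson weight $\PRewud^{a+1}$.
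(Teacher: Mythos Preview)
Your proposal is correct and matches the paper's own approach: the paper does not give a detailed proof of Theorem~\ref{bergnecessity} but simply states that one argues exactly as in the Hardy case (Theorems~\ref{boundedchar}, \ref{thm:schattencarleson}, \ref{thm:necessaryp>2}, \ref{thm:pinzeroone}) after replacing $M_\varphi$ by $M_{\varphi,1+a}$, $(\mathcal D_{-2})_0$ by $(\mathcal D_{-a-2})_0$, and invoking from \cite{KP22} the Stanton formula, the Littlewood-type inequality and the (weak) submean value property, together with the kernel asymptotics~\eqref{rep}. The only point where you diverge slightly is in proposing to rederive the weighted Littlewood inequality by rerunning the Lindel\"of argument of Theorem~\ref{thm:littlewoodinequality} with $(\Re s)^{a+1}$ in place of $\Re s$; that substitution does not go through directly (the Green's function bound produces $\Re s$, not its powers), and the paper instead simply quotes \cite[Proposition~5.4]{KP22} for this estimate.
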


To prove Theorem~\ref{bergnecessity} one can argue in a similar manner with the Hardy space $\mathcal{H}^2$, using the analogue key ingredients, those are:
The change of variables formula \cite[Theorem~1.2]{KP22}, the Littlewood-type inequality \cite[Proposition~5.4]{KP22}, the weak submean value property \cite[Theorem~4.11]{KP22} and the behavior of reproducing kernels \eqref{rep}.

\subsection{Carleson measures}
E. Saksman and J--F. Olsen \cite{OS12} proved that if $\mu$ is a Carleson measure for $\mathcal{H}^2$,
then it is a  Carleson measure for $H^2(\mathbb{C}_{\frac{1}{2}})$. 
The converse is also true with the extra assumption that $\mu$ has compact support. 

A direct consequence of the local embedding theorem is that a sufficient condition for a measure $\mu$ 
in $\left\{\frac{1}{2}<\Re s<\sigma_\infty\right\}$ to be Carleson for $\mathcal{H}^2$ is 
$\{C(\mu_n,H^2(\Cud))\}_{n\in\mathbb{Z}}\in\ell^1$, where $\mu_n$ is the restriction of $\mu$ on the half-strip $\{s\in\mathbb{C}_{\frac{1}{2}}:n\leq \Im s<n+1\}.$ Indeed,
\begin{align*}
\int\limits_{\mathbb{C}_{\frac{1}{2}}}|f(w)|^2d\mu(w)&\ll\sum\limits_{n\in\mathbb{Z}}\,\,\int\limits_{\mathbb{C}_{\frac{1}{2}}}\left|\frac{f(w)}{w-in}\right|^2d\mu_n(w)\\
&\ll\sum_{n\in\mathbb Z}C(\mu_n) \left\|\frac{f(\cdot)}{\cdot-in}\right\|_{H^2(\Cud)}\\
&\ll \sum_{n\in\mathbb Z}C(\mu_n)\norm{f}_{\mathcal H^2}^2\\
&\ll \norm{f}^2_{\mathcal{H}^2}.
\end{align*}
An example of such a measure is the restriction of $\frac{M_\varphi(w)}{\Re w-\frac{1}{2}}dA(w)$ to $\left\{\frac{1}{2}<\Re s<\frac{1+\Re\varphi(+\infty)}{2}\right\}$.
The above condition is not necessary, as we will exemplify now.

We consider the sequence $\{s_n\}_{n\geq 1}$, where
$$s_n=\frac{1}{2}+\left(\frac{1}{2}\right)^n+i\left(n+\frac{1}{2}\right).$$
As we will prove in a moment the measure $d\mu(w)=\sum\limits_{n\geq 1}(\Re s_n-\frac{1}{2})\delta_{s_n}(w)$ is a Carleson measure for $\mathcal{H}^2$, where $\delta_{s_n}(w)$ is a Dirac mass at $s_n$ . The restriction $\mu_n,\,n\geq 1$ has the form
$$d\mu_n(w)=\left(\Re s_n-\frac{1}{2}\right)\delta_{s_n}(w).$$

Let $Q_n,\,n\geq 1$ be the square with center at the point $s_n$ and one side $I_n$ on the line $\{\Re s=\frac{1}{2}\}$. Then,
$$\mu_n(Q_n)= \frac{\left|I_n\right|}{2},$$
consequently $\{C(\mu_n)\}_{n\in\mathbb{Z}}\notin\ell^1$.
It remains to prove that $d\mu(w)=\sum\limits_{n\geq 1}(\Re s_n-\frac{1}{2})\delta_{s_n}(w)$ is a Carleson measure for $\mathcal{H}^2$. Actually, this is true for every sequence  $\{s_n\}_{n\geq 1}$  in $\mathbb{C}_{\frac{1}{2}}$ such that
\begin{equation}\label{eq:carlesonstrange}
 \Re s_{n+1}-\frac{1}{2}\leq a\left(\Re s_n-\frac{1}{2}\right),\qquad n\in\mathbb{N},
\end{equation}
for some $a\in (0,1)$. 
We follow an argument of \cite[Section 4]{SS61}, see also \cite{B51}.
It is sufficient to prove that the matrix $A=\left[\frac{\zeta(s_i+\overline{s_j})}{\sqrt{\zeta(2\Re s_i)}\sqrt{\zeta(2\Re s_j)}}\right]_{i,j\geq1}$
defines a bounded operator on $\ell^2$. We will prove that for every $j\in\mathbb{N}$
\begin{equation}\label{eq:schurtest}
\sum\limits_{i\geq 1}\frac{|\zeta(s_i+\overline{s_j})|}{\sqrt{\zeta(2\Re s_i)}\sqrt{\zeta(2\Re s_j)}}\leq C,
\end{equation}
the result will then follow from Schur's test \cite[Section~3.3]{ZHU07}.

The Riemann zeta function has a simple pole at $1$. Therefore, \eqref{eq:carlesonstrange} yields the existence of $i_0\geq 1$ and of $b\in(0,1)$
such that, for all $i\geq i_0,$ 
\begin{equation*}
\frac{\zeta(2\Re s_i)}{\zeta(2\Re s_{i+1})}\leq b,
\end{equation*}
where $b\in (0,1)$. We only need to prove \eqref{eq:schurtest} for $j\geq i_0.$ On the one hand,
$$\sum_{1\leq i\leq i_0}\frac{|\zeta\left(s_i+\overline{s_j}\right)|}{\sqrt{\zeta(2\Re s_i)}\sqrt{\zeta(2\Re s_j)}}\leq i_0 \frac{|\zeta\left(\Re s_{i_0}+\Re s_j\right)|}{\sqrt{\zeta(2\Re s_1)}\sqrt{\zeta(2\Re s_j)}}\leq C.$$
On the other hand,
\begin{align*}
\sum\limits_{i\geq 1}\frac{|\zeta\left(s_i+\overline{s_j}\right)|}{\sqrt{\zeta(2\Re s_i)}\sqrt{\zeta(2\Re s_j)}}&\leq \sum\limits_{i\geq 1}\frac{|\zeta(\frac{1}{2}+\max\{\Re s_i,\Re s_j\})|}{\sqrt{\zeta(2\Re s_i)}\sqrt{\zeta(2\Re s_j)}}\\
&\ll \sum\limits_{i_0\leq i\leq j}\sqrt{\frac{\zeta(2\Re s_i)}{\zeta(2\Re s_j)}}+\sum\limits_{i\geq j\geq i_0}\sqrt{\frac{\zeta(2\Re s_j)}{\zeta(2\Re s_i)}}\\
&\ll\sum\limits_{i_0\leq i\leq j}b^{\frac{j-i}{2}} + \sum\limits_{i\geq j\geq i_0}b^{\frac{i-j}{2}}\\
&\leq C.
\end{align*}

\bibliographystyle{amsplain-nodash} 
\bibliography{ref3} 
\end{document}